\numberwithin{equation}{section}
\theoremstyle{plain}
\newtheorem{thm}{Theorem}[section]
\newtheorem{lem}[thm]{Lemma}
\theoremstyle{definition}
\newcommand{\bV}{{\bf V}}
\newcommand{\bn}{{\bf n}}
\newcommand{\bx}{\mathbf{x}}
\begin{document}
\baselineskip=1.5pc

\vspace{.5in}

\begin{center}

{\large\bf Krylov implicit integration factor discontinuous Galerkin methods on sparse grids for high dimensional reaction-diffusion equations}

\end{center}

\vspace{.1in}

\centerline{
Yuan Liu  \footnote{Department of Mathematics and Statistics, Mississippi State University, Mississippi State,
MS 39762, USA.
E-mail: yliu@math.msstate.edu. Research supported by a grant from the Simons Foundation (426993, Yuan Liu).} \qquad
Yingda Cheng  \footnote{Department of Mathematics, Department of  Computational Mathematics, Science and Engineering, Michigan State University,
               East Lansing, MI 48824, USA.
E-mail: ycheng@msu.edu. Research is supported by NSF grants  DMS-1453661, DMS-1720023 and the Simons Foundation under award number 558704.} \qquad
Shanqin Chen \footnote{Department of Mathematical Sciences, Indiana University South Bend, South Bend, IN 46615, USA.
E-mail: chen39@iusb.edu} \qquad
Yong-Tao Zhang \footnote{Department of Applied and Computational Mathematics and Statistics,
University of Notre Dame, Notre Dame, IN 46556, USA. E-mail: yzhang10@nd.edu. Research supported by NSF grant DMS-1620108.
}
}

\vspace{.4in}

\centerline{\bf Abstract}

%\vspace{.1in}
Computational costs of numerically solving multidimensional partial differential equations (PDEs)
increase significantly when the spatial dimensions of the PDEs are high, due to large number of spatial grid points.
For multidimensional reaction-diffusion equations, stiffness of the
system provides additional challenges for achieving efficient numerical simulations.
In this paper, we propose a class of Krylov implicit integration factor (IIF) discontinuous Galerkin (DG) methods on sparse grids to solve reaction-diffusion equations on high spatial dimensions.
The key ingredient of spatial DG discretization is the multiwavelet bases on nested sparse grids, which can significantly reduce the numbers of degrees of freedom. To deal with the stiffness of the DG spatial operator in discretizing reaction-diffusion equations, we apply the efficient IIF time discretization methods, which are a class of exponential integrators. Krylov subspace approximations are used to evaluate the large size matrix exponentials resulting from IIF schemes for solving PDEs on high spatial dimensions. Stability and error analysis for the semi-discrete scheme are performed. Numerical examples of both scalar equations and systems
in two and three spatial dimensions are provided to demonstrate the
accuracy and efficiency of the methods. The stiffness of the reaction-diffusion equations is resolved well and
large time step size computations are obtained.

\bigskip

\bigskip
%\vfill

{\bf Key Words:}
Sparse grid; Discontinuous Galerkin methods; Implicit integration factor methods;
Krylov subspace approximation; Reaction-diffusion equations.

%{\bf AMS(MOS) subject classification:} 65M99

\pagenumbering{arabic}

%%%%%%%%%% section 1 %%%%
\section{Introduction}
\label{sec1}
\setcounter{equation}{0}
\setcounter{figure}{0}
\setcounter{table}{0}

One class of widely used mathematical models for many physical and biological applications are the reaction-diffusion systems which take the following general form
\begin{equation} \label{eq1}
\frac{\partial u(\mathbf{x}, t)}{\partial t} = \nabla \cdot (\mathbf{k}(\mathbf{x}) \nabla u)  + f(u(\mathbf{x},t)), \qquad \mathbf{x} \in \Omega,
\end{equation}
subject to initial and periodic or Dirichlet boundary conditions. $\Omega=[0,1]^d$ is the spatial domain which is a $d$-dimensional hypercube. $u$ usually represents concentrations of a group of physical or biological species, $\mathbf{k}(\mathbf{x}) = (k_{i, j}(\mathbf{x}))_{1 \le i, j \le d}$ represents the diffusion rates of the species, and $f(u(\mathbf{x},t))$ describes the chemical or biological reactions.  We assume that $\mathbf{k}(\mathbf{x})$ is symmetric positive definite and bounded below and above uniformly, i.e. there exists positive constant $K_0$, $K_1$ such that for any $\mathbf{x} \in \Omega$,
\begin{equation}\label{eq1_1}
K_0 \mathbf{x} \cdot \mathbf{x} \le \mathbf{k(x)}\mathbf{x}\cdot \mathbf{x} \le K_1 \mathbf{x} \cdot \mathbf{x}.
\end{equation}
 Typical applications of the system include the spatial pattern formation such as Gray-Scott model \cite{gray1983autocatalytic, mazin1996pattern}, Schnakenberg model \cite{hundsdorfer2013numerical, iron2004stability}, Fitzhugh Nagumo model \cite{izhikevich2006fitzhugh, jones1984stability}, and many others (e.g., \cite{aragon2002turing, gierer1972theory, ZLN, HHHLSZ}).

Many state-of-the-art numerical methods have been developed to solve the reaction-diffusion systems (\ref{eq1}). Among them, exponential integrators are an efficient class of time-stepping methods to resolve stiffness in the reaction-diffusion systems. The linear diffusions are integrated exactly in exponential integrators, so the stability
 constraint on the time step size caused by linear diffusion terms is totally removed.
 To resolve the constraint on the time step size caused by stiff nonlinear reaction terms,
 implicit integration factor (IIF) methods were developed in \cite{NZZ,NWZL}.
 IIF methods are a class of exponential integrators.
 A novel property of the methods is that the implicit treatment of stiff nonlinear reaction terms is free of the matrix exponential operation, hence the exact integration of the diffusion terms and the implicit treatment of stiff reactions are decoupled. This approach significantly simplifies the resulting nonlinear algebraic system and leads to efficient computations. It also distinguishes IIF methods from standard fully implicit schemes and other implicit exponential integrators (e.g., implicit exponential time differencing (ETD) methods \cite{B2}).

The major difficulty in applying the IIF methods to high spatial dimension PDEs is how to efficiently compute the matrix exponential since the differential
matrix from the high dimensional spatial discretization is extremely large. One approach is to use the compact IIF methods \cite{NWZL, LiuNie, Zhao, WangZhangNie, WangChenNie}. Another approach is to use Krylov subspace approximations.
In \cite{chen2011krylov}, Krylov IIF methods were developed to efficiently implement the IIF schemes for solving multidimensional reaction-diffusion PDEs, where the Krylov subspace approximations to the matrix exponential operator were applied to IIF schemes. Furthermore, it was shown that Krylov IIF methods can be designed to efficiently solve stiff advection-diffusion-reaction systems \cite{JZ1,JZ2}, and they are
especially efficient to solve diffusion problems with cross derivative terms \cite{LZ2}. In \cite{LZ1}, Krylov IIF schemes
were coupled with finite difference methods on sparse grids to solve high dimensional convection-diffusion equations  using the combination technique,
in which it was shown that the efficiency of Krylov IIF schemes for solving high dimensional problems can be improved significantly in high dimensions (see e.g. \cite{smolyak1963quadrature, BG}). For higher order PDEs such as fourth-order equations, the Krylov IIF schemes were shown to be also effective to resolve their stiffness \cite{MZ}.

For the spatial discretizations,
discontinuous Galerkin (DG) finite element methods are a class of popular high order accuracy
methods for numerically solving various PDEs. DG methods have common advantages shared by all finite element methods such as their ability for easy
handling of complicated geometry and boundary conditions. At the same time, they have lots of
flexibility which is not possessed by continuous Galerkin finite element methods. For example, DG methods are
 flexible for easy $hp$ adaptivity which includes changes of approximation orders between neighboring elements and
allowing general meshes with hanging nodes. DG schemes are compact hence easy to efficiently implement on parallel computers. In \cite{chen2011krylov}, Krylov IIF DG methods were designed to solve stiff reaction-diffusion equations.
It was shown that the Krylov IIF methods can resolve very well the stiffness of the DG spatial discretization for reaction-diffusion PDEs which have higher than first order spatial derivatives.
As that for other PDEs with high spatial dimensions,
 a major challenge for numerically solving high dimensional reaction-diffusion equations is the demanding requirement due to the curse of dimensionality. Generally, the computational cost and storage are as large as $O(h^{-d})$ when solving a $d$-dimensional problem, where $h$ represents the mesh size in one spatial direction. To break the curse of dimensionality, there have been attempts using sparse grid  finite element method \cite{PTS}, adaptive wavelet method \cite{abgv} to compute parabolic problems and Black-Scholes equation in option pricing \cite{BHPS}.
On the other hand, the sparse grid DG methods  were developed in \cite{wang2016sparse, guo2016sparse} to efficiently solve high dimensional elliptic equations and kinetic equations. The key ingredient is the multiwavelet bases on hierarchical grids, which can significantly reduce the numbers of degrees of freedom in the spatial DG discretization.
In this paper, we apply the multiwavelet bases on nested sparse grids to
Krylov IIF DG methods, and design the sparse grid Krylov IIF DG methods to solve high dimensional reaction-diffusion equations efficiently. The diffusion operator is discretized by the sparse grid interior penalty (IP) DG method introduced in  \cite{wang2016sparse} for elliptic problems. The stiffness of high dimensional reaction-diffusion equations is resolved well and
large time step size computations are obtained.
 %A sparse grid  was proposed in \cite{PTS} to compute parabolic equation with $hp$-adaptive DG method in time.

The rest of the paper is organized as following. In Section 2, we describe the DG spatial discretization on sparse grids for reaction-diffusion PDEs and perform stability and error analysis for the semi-discrete scheme. Krylov IIF schemes for solving the resulting stiff systems are presented in Section 3. Numerical experiments including both linear and nonlinear examples are reported in
Section 4. Discussions and conclusions are given in Section 5.

%%%%%%%%%% section 2 %%%%
\section{Sparse grid DG method in space}
In this section, we will define the semi-discrete sparse grid IPDG method, and provide stability and error analysis. The main ingredients of the scheme include the sparse grid DG finite element space \cite{wang2016sparse}, and the classical IPDG formulation for parabolic problems \cite{arnold}.

\subsection{Formulation of the scheme}

 First, we review the sparse grid DG finite element space introduced in \cite{wang2016sparse}. %We introduce 1D hierarchical decomposition of piecewise polynomial space.
 The construction starts from one dimension. On the interval $[0, 1]$, we define the ``nested partition",
 \begin{equation}\label{sp4}
 I^j_n = (2^{-n}j, 2^{-n}(j+1)], \qquad j=0, 1, ... , 2^n-1,
 \end{equation}
 which consists of uniform cell of size $2^{-n},$ where $n$ denotes the level of the grid. On each level $n$, let
  \begin{equation}\label{sp5}
  V^k_n = \{v | v\in P^k(I^j_n), \quad j=0, 1, 2, ..., 2^n-1 \}
  \end{equation}
  be the usual piecewise polynomial space of degree less than or equal to $k.$ It is obvious that
  \begin{equation}\label{sp6}
  V^k_n \subset V^k_{n+1}, \qquad n=0, 1, 2, ....
  \end{equation}
  We then define the multiwavelet subspace $W^k_n$ as the orthogonal complement of $V^k_{n-1}$ in $V^k_n$ in terms of $L^2$ inner product, i.e.
  \begin{equation}\label{sp7}
  V^k_{n-1} \oplus W^k_n = V^k_n, \qquad  W^k_n \perp V^k_{n-1}.
    \end{equation}
  Let $W^k_0 = V^k_0$ and we will have
  \begin{equation}\label{sp8}
  V^k_N = \oplus_{0\le n \le N} W^k_n.
  \end{equation}
 When $n=0$, the scaled Legendre polynomials are chosen as the basis functions. %Following the work in \cite{wang2016sparse},  the basis functions in $W^k_l, l \ge 1$ are the multiwavelet bases defined in \cite{alpert1993class}.
For $n \ge 1$, we define the bases on $W^k_n$ by dilation and translation of the orthonormal   multiwavelet bases $f_p(\cdot)$ given in \cite{alpert1993class}, i.e.,
\begin{equation}\label{sp9}
 v^j_{p, n} = 2^{\frac{n}{2}} f_p(2^l x-(2j+1)), \quad p=1, 2, ..., k+1,  \quad j=0, 1, ... , 2^{n-1}-1.
\end{equation}
It is easy to check that the bases are orthonormal.%hold the orthonormal property with respect to the different mesh level, different interval and the degree of basis function.

 We are now ready to define the sparse grid DG finite element space by tensor product in $d$-dimensional space with $d\ge2.$
 %We recall the following notations for multi-indices.
%For a multi-index $\bm{\alpha} = (\alpha_1, \alpha_2, ..., \alpha_d) \in \mathcal{N}^d_0$, where $\mathcal{N}^d_0$ is the set of nonnegative integers, we let $|\bm{\alpha}|_1 = \sum^d_{i=1} \alpha_i.$
%\begin{equation}\label{sp16}
%|\bm{\alpha}|_1 = \sum^d_{i=1} \alpha_i, \qquad |\bm{\alpha}|_{\infty} = \max_{1 \leq i \leq d} \alpha_i
%\end{equation}
%and
%\begin{equation}\label{sp17}
%\bm{\alpha} \cdot \bm{\beta} = (\alpha_1\beta_1, ..., \alpha_d\beta_d), \qquad c \cdot \bm{\alpha} = (c\alpha_1, ..., c\alpha_d), \qquad  2^{\mathbf{\bm{\alpha}}} = (2^{\alpha_1}, ..., 2^{\alpha_d}).
%\end{equation}
%\begin{equation}\label{sp18}
%\bm{\alpha} \leq \bm{\beta} \Leftrightarrow  \alpha_i \leq \beta_i, \quad i=1, 2, ..., d
%\end{equation}
%\begin{equation}\label{sp19}
%\bm{\alpha} < \bm{\beta}  \Leftrightarrow \bm{\alpha} \leq \bm{\beta} \quad \text{and} \quad \bm{\alpha} \neq \bm{\beta}.
%\end{equation}
Assume that $W^k_{l_m, x_m}$ is the multiwavelet spaces in $x_m$-direction, we let
 \begin{equation}\label{sp10}
 \textbf{W}^k_\textbf{l} = W^k_{l_1, x_1} \times W^k_{l_2, x_2} \times ... \times W^k_{l_d, x_d},
 \end{equation}
 where $\bm{l} = (l_1, ... , l_d)$ is a multi-index, and  $|\bm{l}|_1 = \sum^d_{i=1} l_i.$
 Correspondingly, the basis functions for $\textbf{W}^k_\textbf{l}$ are defined by a tensor product construction
\begin{equation}\label{sp14}
v^{\mathbf{j}}_{\mathbf{p}, \mathbf{l}} (\mathbf{x}) = \prod^d_{m=1} v^{j_m}_{p_m, l_m} (x_m), \quad p_m = 1, 2, ..., k+1, \quad j_m = 0, ..., \max(0, 2^{l_m-1}-1).
\end{equation}
% and we have
% \begin{equation}\label{sp11}
% \textbf{V}^k_\textbf{l} = V^k_{l_1, x_1} \times V^k_{l_2, x_2} \times ... \times V^k_{l_d, x_d} = \oplus_{j_1\le l_1, ..., j_d \le l_d} \textbf{W}^k_{\textbf{l}}.
%  \end{equation}
%  Obviously,
%  \begin{equation}\label{sp12}
%  \textbf{V}^k_N = \oplus_{|\textbf{l}|_{\infty} \le N} \textbf{W}^k_\textbf{l}
%  \end{equation}
 %is the commonly used piecewise polynomial space of degree at most $k$ in $d$-dimension.
The sparse grid DG finite element approximation space is defined as
 \begin{equation}\label{sp13}
 \hat{\textbf{V}}^k_N = \oplus_{|\textbf{l}|_1 \le N} \textbf{W}^k_\textbf{l}.
\end{equation}
This is a piecewise polynomial space that is defined on $\Omega_N,$ which is a uniform partition of $\Omega$ with $2^N$ cells in each dimension.
In  \cite{wang2016sparse}, it was shown that the number of degrees of freedom of $\hat{\textbf{V}}^k_N$ scales as $O((k+1)^d2^NN^{d-1})$ or $O((k+1)^d h_N^{-1} (\log h_N)^{d-1})$, where $h_N=2^{-N}$ denotes the finest mesh size in each direction. Hence, numerical simulation  employing this space is much more efficient compared with the one using traditional full grid space. The approximation results of this space were established in \cite{wang2016sparse, guo2016sparse}, which shows that a smooth solution can be   approximated well.

%As a result, our DG solution with the sparse approximation in $ \hat{\textbf{V}}^k_N $ can be expressed as
%\begin{equation}\label{sp15}
%u_h(\mathbf{x}) = \sum_{\substack{ |\mathbf{l}|_1 \leq N \\ \mathbf{0} \leq \mathbf{j} \leq \max(\mathbf{0}, 2^{\mathbf{l}-1}- 1) \\ \mathbf{1} \leq \mathbf{p} \leq \mathbf{k+1}}} u^{\mathbf{j}}_{\mathbf{p}, \mathbf{l}} v^{\mathbf{j}}_{\mathbf{p}, \mathbf{l}}
%\end{equation}

\medskip

Now we are ready to define the semi-discrete scheme for the general reaction diffusion equations (\ref{eq1}). We use the classical IPDG formulation \cite{arnold}, i.e. we look for $u_h \in \hat{\textbf{V}}^k_N,  k\ge1$ such that for any test function $v \in \hat{\textbf{V}}^k_N$,
\begin{align}\label{sp2}
\int_{\Omega} (u_h)_t v \ d\textbf{x}  +  B(u_h, v)  = L(v),
\end{align}
where the bilinear form is defined as
$$
B(u_h, v)= \int_{\Omega} \mathbf{k} \nabla u_h \cdot \nabla v \ d\textbf{x}  - \sum_{e\in \Gamma } \int_{e} \{\mathbf{k} \nabla u_h \} \cdot [v] \ ds -\sum_{e \in \Gamma} \{\mathbf{k}\nabla v \} \cdot [u_h] \ ds+  \sum_{e \in \Gamma} \frac{\sigma}{h} \int_e [u_h] \cdot [v] \ ds,
$$
and
$$
L(v)=\int_{\Omega} fv \ d \textbf{x}
$$ for periodic boundary condition, and
$$
L(v)= \int_\Omega f v d\textbf{x}-\int_{\partial \Omega} \left (\mathbf{k} \nabla v \cdot \textbf{n}+\frac{\sigma}{h} v \right ) g \, ds,
$$ for Dirichlet boundary condition $u(\mathbf{x})=g(\mathbf{x}).$

Here, $\Gamma $ is the union of the boundaries for all the fundamental cells  in the partition and $\sigma$ is the penalty parameter, which is taken to be $20$ in this paper. The average and jump are defined as,
\begin{align}\label{sp3}
[q]=q^-n^-+q^+n^+, \qquad & \{ q\} = \frac{1}{2} (q^-+q^+),  \nonumber \\
[\textbf{q}] = \textbf{q}^- \cdot \textbf{n}^-  + \textbf{q}^+ \cdot \textbf{n}^+ ,  \qquad & \{ \textbf{q}\} = \frac{1}{2}(\textbf{q}^- + \textbf{q}^+).
\end{align}
where $\textbf{n}$ is the unit normal. `-' and `+' represent that the directions of the vector point to interior and exterior at the element $e$ respectively.  The definition \eqref{sp3} extends naturally to the boundary for periodic boundary conditions. For Dirichlet boundaies, if $e$ is part of the boundary, then we let $[q] = q \textbf{n}$ ($\textbf{n}$ is the outward unit normal) and $\{\textbf{q} \} = \textbf{q}$. %Notice that \eqref{sp2} is the standard IPDG scheme, but with a different space choice $\hat{\textbf{V}}^k_N.$
%The sparse grid IPDG takes the same weak formulation with the classical IPDG method as shown in (\ref{sp2}), but different function space, which we denote as $\hat{V}^k_N$. In the following, we will describe the construction the sparse grid DG finite element space \cite{wang2016sparse, guo2016sparse}.

\subsection{Stability and error analysis }
In this subsection, we will briefly discuss about the properties of the semi-discrete scheme \eqref{sp2}. The analysis follows closely to   \cite{arnold, wang2016sparse}. We use $\|\cdot\|$ to denote the standard $L^2$ norm on $\Omega$, and
  define the  energy norm of a function $v \in H^2(\Omega_N)$ by
$$ ||| v ||| ^2 :=\sum_{\substack{T \in \Omega_N}}  \int_{T} |\nabla v|^2 \,d\bx \, + \sum_{\substack{e \in \Gamma}}h_N \int_{e} \left \{  \frac{\partial v}{\partial \bn} \right \}^2\,ds\, + \sum_{\substack{e \in \Gamma}}\frac{1}{h_N} \int_{e} [v]^2\,ds.$$

We review some basic properties of the bilinear operator $B(\cdot, \cdot).$
%\begin{lem}[Orthogonality]
%\label{lem:orth}
%Let $u$ be the exact solution to \eqref{elliptic eqn}, and $u_h$ be the numerical solution to \eqref{sg}, then
%$$B( u-u_h, v )=0,   \quad \forall  v \in \hat{\bV}_N^k.$$
%\end{lem}
%\noindent{\it Proof}:
%Using integration by parts, we can easily show $B(u, v)=0,  \, \forall  v \in \hat{\bV}_N^k.$ The Galerkin orthogonality immediately follows.
% \hfill $ \blacksquare $
\begin{lem}[Boundedness \cite{arnold}]
\label{lem:bound}
There exists a positive constant $C_b$, depending only on $K_1, \sigma$, such that
$$|B( w, v )| \le C_b |||w |||\cdot ||| v |||,   \quad \forall \,w, v \in H^2(\Omega_N).$$
\end{lem}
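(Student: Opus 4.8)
The plan is to bound each of the four terms in the bilinear form $B(w,v)$ separately using Cauchy--Schwarz and the structure of the energy norm $|||\cdot|||$, then collect constants. I would write
$$
B(w,v) = \underbrace{\int_\Omega \mathbf{k}\nabla w\cdot\nabla v\,d\mathbf{x}}_{(I)} \;-\; \underbrace{\sum_{e\in\Gamma}\int_e \{\mathbf{k}\nabla w\}\cdot[v]\,ds}_{(II)} \;-\; \underbrace{\sum_{e\in\Gamma}\int_e \{\mathbf{k}\nabla v\}\cdot[w]\,ds}_{(III)} \;+\; \underbrace{\sum_{e\in\Gamma}\frac{\sigma}{h_N}\int_e [w]\cdot[v]\,ds}_{(IV)}.
$$
For term $(I)$, the upper bound $\mathbf{k}(\mathbf{x})\xi\cdot\xi\le K_1|\xi|^2$ from \eqref{eq1_1} together with Cauchy--Schwarz in $L^2(\Omega)$ gives $|(I)|\le K_1\big(\sum_{T}\int_T|\nabla w|^2\big)^{1/2}\big(\sum_T\int_T|\nabla v|^2\big)^{1/2}\le K_1\,|||w|||\,|||v|||$. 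For term $(IV)$, a direct Cauchy--Schwarz on each edge $e$ in $L^2(e)$, writing $\frac{\sigma}{h_N}=\sqrt{\sigma}\cdot\frac{1}{\sqrt{h_N}}\cdot\sqrt{\sigma}\cdot\frac{1}{\sqrt{h_N}}$, yields $|(IV)|\le \sigma\big(\sum_e \frac{1}{h_N}\int_e[w]^2\big)^{1/2}\big(\sum_e\frac1{h_N}\int_e[v]^2\big)^{1/2}\le \sigma\,|||w|||\,|||v|||$.

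For the consistency/symmetry terms $(II)$ and $(III)$, which are symmetric in role, it suffices to bound $(II)$. On each edge $e$ I would split $\frac1{h_N}=\sqrt{h_N}\cdot\frac1{\sqrt{h_N}}\cdot\frac1{h_N}$... more precisely, insert $h_N^{1/2}h_N^{-1/2}$ and use the uniform bound on $\mathbf{k}$ to get, for the scalar-vector pairing, $\big|\int_e\{\mathbf{k}\nabla w\}\cdot[v]\,ds\big|\le K_1\big(h_N\int_e\{\nabla w\}^2 ds\big)^{1/2}\big(\frac1{h_N}\int_e[v]^2 ds\big)^{1/2}$ (absorbing the normal direction of $[v]$ since $|\mathbf{n}|=1$, and using that $\{\mathbf{k}\nabla w\}$ is controlled by $K_1\{\nabla w\}$ edgewise up to a harmless constant from the average/product structure). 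Summing over $e\in\Gamma$ and applying discrete Cauchy--Schwarz over edges gives $|(II)|\le C\,K_1\,|||w|||\,|||v|||$ with an absolute constant $C$, and the identical bound holds for $(III)$.

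Adding the four estimates produces $|B(w,v)|\le C_b\,|||w|||\,|||v|||$ with $C_b = C_b(K_1,\sigma)$, as claimed; the dependence is only on $K_1$ and $\sigma$ because the diffusion lower bound $K_0$ plays no role in an upper bound. The main technical point to get right is the treatment of the edge terms $(II)$--$(III)$: one must correctly pair the scaling weights $h_N$ and $1/h_N$ so that $\{\nabla w\}$ lands against the $h_N\int_e\{\partial v/\partial\mathbf n\}^2$ piece of the energy norm while $[v]$ lands against the $h_N^{-1}\int_e[v]^2$ piece, and one must verify that replacing $\{\mathbf{k}\nabla w\}$ by $K_1\{\nabla w\}$ costs at most a fixed constant (since on an edge $\{\mathbf{k}\nabla w\}=\tfrac12(\mathbf{k}\nabla w|^- + \mathbf{k}\nabla w|^+)$ and $|\mathbf{k}\xi|\le$ a multiple of $K_1|\xi|$ for symmetric positive definite $\mathbf k$ bounded by $K_1$). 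Everything else is routine Cauchy--Schwarz, so I expect this edge-scaling bookkeeping to be the only place demanding care.
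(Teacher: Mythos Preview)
The paper does not prove this lemma; it is quoted from Arnold's classical IPDG paper and used as a black box. Your term-by-term Cauchy--Schwarz argument is precisely the standard proof one finds in the IPDG literature, and your handling of $(I)$ and $(IV)$ is clean and correct.

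One point in $(II)$--$(III)$ does not quite close as written. You bound the edge integral by $K_1\big(h_N\int_e|\{\nabla w\}|^2\,ds\big)^{1/2}\big(h_N^{-1}\int_e[v]^2\,ds\big)^{1/2}$ and then assert this is dominated by $|||w|||\,|||v|||$. But the energy norm defined in the paper contains only the \emph{normal} component $h_N\int_e\{\partial w/\partial\mathbf{n}\}^2\,ds$, not the full gradient trace $h_N\int_e|\{\nabla w\}|^2\,ds$. Since $[v]$ points in the normal direction, $\{\mathbf{k}\nabla w\}\cdot[v]=(\{\mathbf{k}\nabla w\}\cdot\mathbf{n})(v^{-}-v^{+})$, and for anisotropic $\mathbf{k}$ this co-normal flux involves tangential derivatives of $w$ and is not controlled by $\{\partial w/\partial\mathbf{n}\}$ alone. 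For scalar diffusion $\mathbf{k}=kI$ the issue disappears because $\mathbf{k}\nabla w\cdot\mathbf{n}=k\,\partial w/\partial\mathbf{n}$; for general $\mathbf{k}$ one must either enlarge the energy norm to carry the full edge gradient (as many IPDG references do), or invoke a multiplicative trace inequality to absorb the tangential part into the volume term $\sum_T\int_T|\nabla w|^2$, which for general $w\in H^2(\Omega_N)$ also brings in an $h_N^2|w|_{H^2}^2$ contribution not present in $|||\cdot|||$. This is arguably a mismatch in the paper's choice of norm rather than a flaw in your overall strategy, but you should address it explicitly instead of silently replacing $\{\partial w/\partial\mathbf{n}\}$ by $\{\nabla w\}$.
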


\begin{lem}[Coercivity \cite{arnold}]
\label{lem:stab}
When $\sigma$ is taken large enough, there exists a positive constant $C_s$ depending only on $K_0$, such that
$$B( v, v ) \ge C_s  ||| v |||^2,   \quad \forall \,  v \in \hat{\bV}_N^k.$$
\end{lem}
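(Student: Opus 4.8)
The plan is to estimate the three "bad" terms in $B(v,v)$ — the two symmetric flux terms $-2\sum_{e\in\Gamma}\int_e\{\mathbf{k}\nabla v\}\cdot[v]\,ds$ — against the "good" terms, namely the volume term $\int_\Omega\mathbf{k}\nabla v\cdot\nabla v\,d\mathbf{x}$ and the penalty term $\sum_{e\in\Gamma}\frac{\sigma}{h_N}\int_e[v]^2\,ds$. First I would use the upper bound in \eqref{eq1_1} to control $\{\mathbf{k}\nabla v\}$ on each face, reducing matters to bounding $\sum_{e\in\Gamma}\int_e|\{\nabla v\}|\,|[v]|\,ds$. By Cauchy–Schwarz on each face followed by the weighted Young's inequality $ab\le \frac{\epsilon}{2}a^2+\frac{1}{2\epsilon}b^2$, split this into $\epsilon\sum_e h_N\int_e\{\partial_\mathbf{n}v\}^2\,ds$ plus $\frac{1}{\epsilon}\sum_e\frac{1}{h_N}\int_e[v]^2\,ds$. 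The second piece is absorbed into the penalty term provided $\sigma$ is large relative to $1/\epsilon$.

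The key remaining step is to absorb the term $\epsilon\sum_e h_N\int_e\{\partial_\mathbf{n}v\}^2\,ds$ into the volume term $\int_\Omega\mathbf{k}\nabla v\cdot\nabla v\,d\mathbf{x}\ge K_0\sum_{T\in\Omega_N}\int_T|\nabla v|^2\,d\mathbf{x}$. This is where the \emph{inverse (trace) inequality} on the sparse-grid DG space enters: for $v\in\hat{\bV}_N^k$ one has, on each cell $T$ and each face $e\subset\partial T$, a bound of the form $h_N\int_e|\nabla v|^2\,ds\le C_{\mathrm{inv}}\int_T|\nabla v|^2\,d\mathbf{x}$, with $C_{\mathrm{inv}}$ depending only on $k$ and the (fixed) shape regularity of the uniform partition $\Omega_N$. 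Summing over faces and cells gives $\sum_e h_N\int_e\{\partial_\mathbf{n}v\}^2\,ds\le C_{\mathrm{inv}}'\sum_{T}\int_T|\nabla v|^2\,d\mathbf{x}$. Choosing $\epsilon$ small enough that $\epsilon C_{\mathrm{inv}}'\le \frac{K_0}{2}$, and then $\sigma$ large enough that the penalty contribution from Young's inequality is at most half the penalty norm term, we obtain $B(v,v)\ge \frac{K_0}{2}\sum_T\int_T|\nabla v|^2\,d\mathbf{x}+\frac12\sum_e\frac{1}{h_N}\int_e[v]^2\,ds$. Since $\frac12\sum_e\frac{1}{h_N}\int_e[v]^2\,ds$ already dominates (a multiple of) the middle term $\sum_e h_N\int_e\{\partial_\mathbf{n}v\}^2\,ds$ in $|||v|||^2$ by the same inverse inequality, we can reconstitute the full energy norm and conclude with $C_s$ depending only on $K_0$ (and $k$, $\sigma$, which are fixed once and for all).

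I expect the main obstacle to be a careful statement and use of the inverse inequality on $\hat{\bV}_N^k$: unlike a standard DG space on a single partition, functions in $\hat{\bV}_N^k$ live on the sparse tensor-product hierarchy, but since $\hat{\bV}_N^k\subset \bV_N^k$ (the full-grid piecewise-polynomial space of degree $\le k$ on the uniform $2^N$-per-direction mesh $\Omega_N$), the classical inverse inequality on $\Omega_N$ applies verbatim, with constant independent of $N$. The only point needing care is that the constants stay independent of the level $N$ — guaranteed here because $\Omega_N$ is a uniform (hence uniformly shape-regular) mesh and the scaling in $h_N$ has been built into the definition of $|||\cdot|||$. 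The remaining estimates are the routine trace/Young's inequality manipulations above, following \cite{arnold, wang2016sparse}.
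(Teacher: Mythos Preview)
The paper does not supply a proof of this lemma; it simply cites \cite{arnold} and states the result. So there is no ``paper's own proof'' to compare against. Your sketch is the standard IPDG coercivity argument (exactly as in \cite{arnold} and its descendants): bound the consistency terms via Cauchy--Schwarz and weighted Young, then invoke the discrete trace/inverse inequality to absorb the face-average term into the broken $H^1$ seminorm, and finally take $\sigma$ large enough. Your observation that $\hat{\bV}_N^k\subset \bV_N^k$ (the full-grid piecewise polynomial space on $\Omega_N$) so that the classical inverse inequality applies with an $N$-independent constant is exactly the right way to handle the sparse-grid aspect.

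One small remark on the constants: in your first step you bound $\{\mathbf{k}\nabla v\}$ on each face using the \emph{upper} bound $K_1$ from \eqref{eq1_1}, so the threshold on $\sigma$ (and hence, implicitly, the final constant) depends on $K_1$ as well as on $k$ through $C_{\mathrm{inv}}$. The paper's claim that $C_s$ depends only on $K_0$ should be read with the understanding that the $K_1$- and $k$-dependence has been absorbed into the phrase ``$\sigma$ taken large enough.'' This is a cosmetic point, not a gap in your argument.
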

%%%remark: here C_s can depend only on K_0, but \sigma will depend on d, see Sue Brenner IP lecture

The semi-discrete stability results can be obtained in the same way as for standard IPDG method for parabolic problems.
\begin{thm}[Stability]
Let   $u_h$ be the numerical solution to \eqref{sp2}. Then we have that
\begin{equation}
\|u_h(T)\|^2+C_s\int_0^T |||u_h|||^2 dt \le \|u_h(0)\|^2 +C \int_0^T \|f\|^2 dt
\end{equation}
for periodic boundary condition, and
\begin{equation}
\label{stable}
\|u_h(T)\|^2+C_s\int_0^T |||u_h|||^2 dt \le \|u_h(0)\|^2 +C \int_0^T \|f\|^2 dt+\frac{C}{h_N} \int_0^T\|g\|^2_{L^2(\partial \Omega)} dt
\end{equation}
for Dirichlet boundary condition, where $C$ is a generic constant independent of $N$ and $u$.
\end{thm}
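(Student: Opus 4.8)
The plan is to run the classical energy argument for interior-penalty DG applied to a parabolic problem, the only new ingredient being bookkeeping that all edge/face integrals fall on terms already weighted in the energy norm $|||\cdot|||$. First I would take the test function $v=u_h$ in \eqref{sp2}. Since $\int_\Omega (u_h)_t u_h\,d\bx=\hf\frac{d}{dt}\|u_h\|^2$, this gives the energy identity
$$\hf\frac{d}{dt}\|u_h\|^2 + B(u_h,u_h)=L(u_h),$$
and the coercivity estimate of Lemma~\ref{lem:stab} (available since $\sigma$ is taken large) replaces the bilinear term by a lower bound, so that $\hf\frac{d}{dt}\|u_h\|^2 + C_s\,|||u_h|||^2\le L(u_h)$. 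It then remains to control $L(u_h)$ by the data plus a small fraction of $|||u_h|||^2$.

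For periodic boundary conditions $L(u_h)=\int_\Omega f u_h\,d\bx$, so Cauchy--Schwarz and Young's inequality give $L(u_h)\le \hf\|f\|^2+\hf\|u_h\|^2$ (here $f$ is treated as a given source term for the a priori estimate). For Dirichlet conditions there are in addition the terms $-\int_{\partial\Omega}(\mathbf{k}\nabla u_h\cdot\bn)\,g\,ds$ and $-\int_{\partial\Omega}\frac{\sigma}{h}u_h g\,ds$. I would bound these edge by edge using exactly the two face integrals appearing in $|||\cdot|||$: on a boundary edge $e$ one has $\{\partial u_h/\partial\bn\}=\partial u_h/\partial\bn$ and $[u_h]=u_h\bn$, hence $h_N\!\sum_{e\subset\partial\Omega}\!\int_e(\partial u_h/\partial\bn)^2\,ds\le|||u_h|||^2$ and $h_N^{-1}\!\sum_{e\subset\partial\Omega}\!\int_e u_h^2\,ds\le|||u_h|||^2$. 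With $|\mathbf{k}\xi|\le K_1|\xi|$ from \eqref{eq1_1}, Cauchy--Schwarz on $\partial\Omega$ gives
$$\Big|\int_{\partial\Omega}(\mathbf{k}\nabla u_h\cdot\bn)g\,ds\Big|\le K_1 h_N^{-1/2}\,|||u_h|||\,\|g\|_{L^2(\partial\Omega)},\qquad \Big|\int_{\partial\Omega}\tfrac{\sigma}{h}u_h g\,ds\Big|\le \sigma h_N^{-1/2}\,|||u_h|||\,\|g\|_{L^2(\partial\Omega)},$$
and one more Young's inequality turns each into $\varepsilon\,|||u_h|||^2+\frac{C}{h_N}\|g\|^2_{L^2(\partial\Omega)}$. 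Choosing $\varepsilon$ small enough that these $|||u_h|||^2$ contributions are absorbed into $C_s\,|||u_h|||^2$ on the left leaves
$$\tfrac{d}{dt}\|u_h\|^2+C_s\,|||u_h|||^2\le \|u_h\|^2+C\|f\|^2+\tfrac{C}{h_N}\|g\|^2_{L^2(\partial\Omega)},$$
with the last term absent in the periodic case.

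Finally I would integrate over $[0,T]$ and apply Gronwall's inequality (to $t\mapsto\|u_h(t)\|^2+C_s\int_0^t|||u_h|||^2$, which dominates $\|u_h(t)\|^2$) to absorb the $\|u_h\|^2$ term, obtaining \eqref{stable} with $C$ depending only on $C_s,K_0,K_1,\sigma,T$ but not on $N$ or $u$; the sharper form with coefficient one on $\|u_h(0)\|^2$ follows if one instead removes $\|u_h\|^2$ one step earlier via the DG Poincaré--Friedrichs inequality $\|v\|\le C|||v|||$ for the Dirichlet energy norm (for periodic data, $|||\cdot|||$ controls $\|v-\bar v\|$, and testing \eqref{sp2} with $v\equiv1$ shows $\frac{d}{dt}\overline{u_h}=\int_\Omega f$, so the mean is controlled by the data too).

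The one step that is not completely routine is the Dirichlet boundary estimate: one must observe that the normal-flux term and the penalty term on $\partial\Omega$ are bounded \emph{exactly} by the $h_N$-weighted boundary integrals already present in $|||\cdot|||$, so that no inverse inequality with an uncontrolled constant is invoked and only the expected factor $h_N^{-1}$ multiplies $\|g\|^2_{L^2(\partial\Omega)}$; once that is in place, absorption into the coercivity term (possible because $\sigma$ is large, as in Lemma~\ref{lem:stab}) and Gronwall's inequality close the argument just as for the standard parabolic IPDG scheme.
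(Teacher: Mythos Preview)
Your proof is correct and follows the same energy argument as the paper: test with $v=u_h$, invoke coercivity (Lemma~\ref{lem:stab}), bound $L(u_h)$ edgewise against the face terms already present in $|||\cdot|||$, and integrate in time. The paper's version is terser---it treats only the Dirichlet case and absorbs the volume term $\int_\Omega f\,u_h$ directly into $\tfrac{C_s}{2}|||u_h|||^2$ (implicitly via the DG Poincar\'e--Friedrichs inequality you mention at the end) rather than passing through Gronwall, which yields the coefficient $1$ on $\|u_h(0)\|^2$ immediately.
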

\begin{proof} Without loss of generality, we only prove for the Dirichlet boundary case.
Let $v=u_h$ in \eqref{sp2}, we get $\frac{1}{2}\frac{d}{dt}\|u_h\|^2+B(u_h, u_h)=L(u_h).$ Using Lemma \ref{lem:stab},  Cauchy-Schwarz and Young's inequality, we have
$$
\frac{1}{2}\frac{d}{dt}\|u_h\|^2+C_s  |||u_h|||^2 \le L(u_h) \le C \|f\|^2+ \frac{C_s}{2}|||u_h|||+\frac{C}{h_N}\|g\|^2_{L^2(\partial \Omega)}.
$$
Multiplying both sides by $2,$ and integrating in time from $0$ to $T,$ we obtain \eqref{stable}.
\end{proof}

The next theorem provides error estimates of the semi-discrete scheme. As in \cite{wang2016sparse}, we introduce the following norm for a function. For any set $L=\{i_1, \ldots i_r \} \subset \{1, \ldots d\}$, we define $L^c$ to be the complement set of $L$ in $\{1, \ldots d\}.$ For a non-negative integer $\alpha$ and set $L$,  we define the semi-norm
\begin{flalign*}
|v|_{H^{\alpha,L}(\Omega)} :=  \left \| \left ( \frac{\partial^{\alpha}}{\partial x_{i_1}^{\alpha}} \cdots \frac{\partial^{\alpha}}{\partial x_{i_r}^{\alpha}}  \right ) v \right \|_{L^2(\Omega)},
\end{flalign*}
and
$$
|v|_{\mathcal{H}^{q+1}(\Omega)} :=\max_{1 \leq r \leq d} \left ( \max_{\substack{L\subset\{1,2,\cdots,d\} \\|L|=r}} |v|_{H^{q+1, L}(\Omega)} \right ),$$
which is the norm for the mixed derivative of $v$ of at most degree $q+1$ in each direction.
\begin{thm}[Error estimate]
\label{thm:error}
Let $u$ be the exact solution to \eqref{eq1}, and $u_h$ be the numerical solution to \eqref{sp2} with numerical initial condtion $u_h(0)=Pu,$ where $P$ denotes the $L^2$ projection of a function onto the space $\hat{\textbf{V}}^k_N.$ For  $k \geq 1$,   any $1 \leq q \leq \min \{p, k\}$, and any
$u \in L^2(0,T;\mathcal{H}^{p+1}(\Omega))$, $N\geq 1$, $d \geq 2$,   we have
$$
\|u-u_h\| \le C N^{-d}  2^{-Nq}  |u|_{L^2(0,T;\mathcal{H}^{q+1}(\Omega))},
$$
and
$$
(\int_0^T |||u-u_h|||^2 dt)^{1/2}  \le C N^{-d}  2^{-Nq}  |u|_{L^2(0,T;\mathcal{H}^{q+1}(\Omega))},
$$
 where $C$ is a generic constant independent of $N$ and $u$.
\end{thm}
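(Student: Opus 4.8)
The plan is to run the classical energy argument for interior penalty DG applied to a parabolic problem (as in \cite{arnold, wang2016sparse}), feeding in the sparse grid approximation estimates of \cite{wang2016sparse, guo2016sparse} to turn the projection error into the stated rate.

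\emph{Step 1: consistency and Galerkin orthogonality.} First I would verify that the exact solution is consistent with \eqref{sp2}. Since $u$ is smooth, $[u]=0$ on every interior edge $e\in\Gamma$, so cell-by-cell integration by parts in the volume term of $B(u,v)$ cancels the flux-average term, and using \eqref{eq1} one obtains $\int_\Omega u_t v\,d\mathbf{x} + B(u,v) = L(v)$ for all $v\in\hat{\mathbf{V}}^k_N$; in the Dirichlet case the boundary conventions for $[\cdot],\{\cdot\}$ together with the boundary contributions built into $L(v)$ are exactly what is needed for this identity to persist. Subtracting \eqref{sp2} gives the error equation
\[
\int_\Omega (u-u_h)_t\, v\,d\mathbf{x} + B(u-u_h,\, v) = 0, \qquad \forall\, v\in\hat{\mathbf{V}}^k_N .
\]

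\emph{Step 2: error splitting.} I would split $u-u_h = \eta + \xi$ with $\eta := u-Pu$ and $\xi := Pu-u_h \in \hat{\mathbf{V}}^k_N$. The projection part is handled directly by the approximation theory for the space $\hat{\mathbf{V}}^k_N$: by \cite{wang2016sparse, guo2016sparse}, for $1\le q\le\min\{p,k\}$ and $u(\cdot,t)\in\mathcal{H}^{q+1}(\Omega)$, both $\|\eta\|$ and $|||\eta|||$ are bounded by the right-hand side of the claimed estimates with the pointwise-in-time norm $|u|_{\mathcal{H}^{q+1}(\Omega)}$ (the pure $L^2$ norm of $\eta$ in fact gains an extra factor $2^{-N}$, which we will not need; for the edge flux and jump terms appearing in $|||\cdot|||$ one combines the broken $H^1$ projection estimate with the standard trace inequalities on the Cartesian cells of $\Omega_N$). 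Squaring and integrating in time over $(0,T)$ replaces $|u|_{\mathcal{H}^{q+1}(\Omega)}$ by $|u|_{L^2(0,T;\mathcal{H}^{q+1}(\Omega))}$.

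\emph{Step 3: energy estimate for $\xi$.} Taking $v=\xi$ in the error equation, the decisive observation is that $\int_\Omega \eta_t\,\xi\,d\mathbf{x}=0$: indeed $\eta_t = u_t - P u_t$ since $P$ commutes with $\partial_t$, and this is $L^2$-orthogonal to $\xi\in\hat{\mathbf{V}}^k_N$. Hence $\tfrac12\tfrac{d}{dt}\|\xi\|^2 + B(\xi,\xi) = -B(\eta,\xi)$. Applying coercivity (Lemma \ref{lem:stab}) on the left, boundedness (Lemma \ref{lem:bound}) on the right, and Young's inequality to absorb $\tfrac{C_s}{2}|||\xi|||^2$ into the left, I get
\[
\frac{d}{dt}\|\xi\|^2 + C_s\, |||\xi|||^2 \le \frac{C_b^2}{C_s}\,|||\eta|||^2 .
\]
Integrating from $0$ to $T$ and using $\xi(0)=Pu(0)-u_h(0)=0$ (the choice $u_h(0)=Pu$) yields $\|\xi(T)\|^2 + C_s\int_0^T |||\xi|||^2\,dt \le \tfrac{C_b^2}{C_s}\int_0^T |||\eta|||^2\,dt$; substituting the Step 2 bound controls both $\|\xi(T)\|$ and $(\int_0^T |||\xi|||^2 dt)^{1/2}$ by the stated quantity. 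Finally the triangle inequalities $\|u-u_h\|\le\|\eta\|+\|\xi\|$ and $|||u-u_h|||\le|||\eta|||+|||\xi|||$ give both claimed estimates.

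\emph{Main obstacle.} Given Lemmas \ref{lem:bound}--\ref{lem:stab}, the energy manipulation is routine; the real content lives entirely in the sparse grid approximation estimates for the $L^2$ projection onto the hierarchical space $\hat{\mathbf{V}}^k_N$ --- in particular the \emph{energy-norm} estimate with its edge flux and jump terms, and the fact that regularity must be measured in the mixed-derivative norm $\mathcal{H}^{q+1}$ rather than an isotropic Sobolev norm. This is precisely where the $N$-dependent (logarithmic) factor and the loss from the nominal $2^{-N(q+1)}$ rate down to $2^{-Nq}$ in the energy norm originate. Since these facts are imported from \cite{wang2016sparse, guo2016sparse}, the only genuinely new bookkeeping is checking consistency in the Dirichlet case and confirming that the time-integrated norms assemble exactly as stated.
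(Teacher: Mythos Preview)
Your proposal is correct and follows essentially the same route as the paper: the same $e=\eta+\xi$ splitting with $\eta=u-Pu$ and $\xi=Pu-u_h$, the same use of the $L^2$ projection to kill $\int_\Omega\eta_t\,\xi\,d\mathbf{x}$, and the same coercivity/boundedness/Young chain, with the sparse grid projection estimate (Lemma~3.4 in \cite{guo2016sparse} plus trace inequalities) supplying the rate for $|||\eta|||$. Your write-up is in fact a bit more explicit than the paper's about consistency and about why $\xi(0)=0$, but there is no substantive difference in the argument.
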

\begin{proof}
Let $e=u-u_h=\eta+\xi,$ where $\eta=u-Pu, \xi=Pu-u_h.$  Using Lemma 3.4 in \cite{guo2016sparse} and trace inequality, we obtain $|||\eta||| \le C N^{-d}  2^{-Nq} |u|_{\mathcal{H}^{q+1}(\Omega)}.$

It is obvious to see that the exact solution also satisfies \eqref{sp2}.
 Therefore, we get the error equation $\int_{\Omega} e_t v \ d\textbf{x}  +  B(e, v) =0.$ Let $v=\xi,$ and use the property of $L^2$ projection, we have
 $$
 \int_{\Omega} \xi_t \xi \ d\textbf{x}  +  B(\xi, \xi) =-B(\eta, \xi).
 $$
 By Lemmas \ref{lem:bound} and \ref{lem:stab}, we obtain
 $$
\frac{1}{2}\frac{d}{dt}\|\xi\|^2+C_s  |||\xi|||^2 \le C_b |||\eta|||\cdot|||\xi|||\le  \frac{C_s}{2}|||\xi||^2+C|||\eta|||^2.
 $$
 Hence,
\begin{eqnarray*}
\|\xi(T)\|^2+C_s  \int_0^T |||\xi|||^2 dt &\le&  \|\xi(0)\|^2+C\int_0^T |||\eta|||^2 dt=C\int_0^T |||\eta|||^2 dt\\
&\le& C N^{-2d}  2^{-2Nq}  |u|^2_{L^2(0,T;\mathcal{H}^{q+1}(\Omega))},
\end{eqnarray*}
and we are done by combining with the estimates of $\xi$ with $\eta$.
\end{proof}

  The results obtained above for the $L^2$ error is sub-optimal. When the exact solution is smooth, the convergence rates in $L^2$ and the energy norm are of order $O((\log h_N)^d h_N^k).$
The standard technique to improve the convergence rate in $L^2$ norm for IPDG method is to use elliptic projection. However, as shown in \cite{wang2016sparse}, we cannot obtain optimal $L^2$ convergence rate for the associated elliptic solver on sparse grid. Therefore, in Theorem \ref{thm:error}, we use the simple $L^2$ projection instead. Numerical examples in Section \ref{numer} indicate an optimal convergence rate up to logarithmic factors, similar as those observed in \cite{wang2016sparse}.

%%%%%%%% section 3
\section{Krylov IIF temporal discretization}

In this section, we present the Krylov IIF methods \cite{chen2011krylov} which are employed to evolve the stiff semi-discretized ODE system resulted from the sparse grid IPDG spatial discretization of the reaction-diffusion PDEs.
The semi-discretized ODE system has the following form
\begin{equation}\label{krylov1}
\frac{d U(t)}{dt} = \mathbf{A}U +F(U),
\end{equation}
where $U(t)$ is the unknown vector, $\mathbf{A}$ is the coefficient matrix derived from the sparse grid DG discretization of the diffusion operator, and $F(U)$ is the nonlinear reaction term. We multiply the integration factor $e^{-At}$ to (\ref{krylov1}) and perform integration on the time interval $[t^n, t^{n+1}]$ with time step size $\Delta t$ to obtain
\begin{equation}\label{krylov2}
U(t^{n+1}) = e^{\mathbf{A}\Delta t}U(t_n) + e^{\mathbf{A}\Delta t} \int^{\Delta t}_0 e^{-\mathbf{A}\tau} F(U(t_n+\tau)) d\tau.
\end{equation}
The integral in (\ref{krylov2}) is approximated by an $(r-1)$-th order Lagrange interpolation polynomial based on numerical values at time level $\{t^{n+1}, t^n, ... t^{n+2-r} \}$, and we obtain the $r$-th order IIF scheme
\begin{equation}\label{krylov3}
U^{n+1} = e^{\mathbf{A}\Delta t}U^n +\Delta t (\alpha_1F(U^{n+1}) + \sum_{i=0}^{r-2} \alpha_{-i}e^{(i+1)\mathbf{A}\Delta t}F(U^{n-i})),
\end{equation}
where
\begin{equation}\label{krylov4}
\alpha_{-i} = \frac{1}{\Delta t} \int^{\Delta t}_0 \prod^{r-2}_{k=-1, k \neq i} \frac{\tau +k\Delta t}{(k-i) \Delta t} d\tau, \qquad -1 \leq i \leq r-2,
\end{equation}
are the Lagrange interpolation coefficients. Among the most often used IIF schemes, the second order IIF scheme (IIF2) is of the following form
\begin{equation}\label{krylov5}
U^{n+1} = e^{\mathbf{A}\Delta t} (U^n +\frac{\Delta t}{2} F(U^n) )+\frac{\Delta t}{2} F(U^{n+1}),
\end{equation}
and the third order scheme (IIF3) is
\begin{equation}\label{krylov6}
U^{n+1} = e^{\mathbf{A}\Delta t} U^n + \Delta t (\frac{5}{12}F(U^{n+1})+\frac{2}{3}e^{\mathbf{A}\Delta t}F(U^n)-\frac{1}{12}e^{2\mathbf{A}\Delta t} F(U^{n-1})).
\end{equation}
In order to compute the matrix exponentials efficiently in IIF schemes for solving multidimensional PDEs,
Krylov IIF schemes \cite{chen2011krylov} use Krylov subspace approximation \cite{ gallopoulos1992efficient} to
evaluate the product of a matrix exponential and a vector such as $e^{\mathbf{A}\Delta t}v$, etc. Let $K_M$ be the dimension $M$ Krylov subspace associated with the matrix $\mathbf{A}$, which is
\begin{equation}\label{krylov7}
K_M = \text{span}\{v, \mathbf{A}v, \mathbf{A}^2v, ..., \mathbf{A}^{M-1}v \}.
\end{equation}
An orthonormal basis $V_M = [v_1, v_2, v_3, ..., v_M]$ for $K_M$ and an $M \times M$ upper Hessenberg matrix $\mathbf{H_M}$ are generated by the Arnoldi algorithm \cite{trefethen1997numerical}. Particularly, $\mathbf{H_M}$ is the projection of the matrix $\mathbf{A}$ onto $K_M$ with respect to the basis $V_M$. Furthermore, since the columns of $V_M$ are orthonormal, we have the approximation
\begin{equation}\label{krylov8}
e^{\mathbf{A}\Delta t}v  \approx \gamma V_Me^{\mathbf{H_M}\Delta t}e_1,
\end{equation}
where $\gamma = ||v||_2$ and $e_1$ is the unit column $M\times 1$ vector with first entry 1. By doing so, the matrix exponential $e^{\mathbf{A}\Delta t}$ problem is replaced by the $e^{\mathbf{H_M} \Delta t}$.
Note that the dimension $M$  of the Krylov subspace is {\bf much} smaller than the dimension of the large sparse
matrix ${\bf A}$. In this paper, the dimension of matrix $A$ is exactly the number of degree of freedom of the DG scheme and is quite large. In our numerical simulations, $M$ is taken to be $25$ or $100$, and accurate results
are obtained as shown in Section 4. Matrix exponentials of small $M \times M$ matrix $\mathbf{H_M}$ are efficiently computed by a scaling and squaring algorithm with a Pad$\acute{e}$ approximation \cite{Higham}.
The computational cost is significantly saved.

%%%%%%%% section 4
%\pagebreak
\section{Numerical examples}
\label{numer}
In this section, we apply the proposed sparse grid Krylov IIF DG methods to various linear and nonlinear problems.
Both $P^1$ and $P^2$ DG cases are tested.  The Krylov IIF2 scheme is coupled with the $P^1$ DG discretization, and
the Krylov IIF3 scheme is with the $P^2$ one.

The numerical implementation of sparse grid DG methods is quite different from traditional DG methods, since the basis functions in space $\hat{\mathbf{V}}_N^k$ are globally and hierarchically defined. In order to save computational cost, the \emph{unidirectional principle} is employed to efficiently evaluate the multidimensional integrations, i.e. we decouple the multidimensional integrations  into the multiplication of one-dimensional integrals by utilizing the hierarchical structure and orthonormal property of the basis functions. We also note that this procedure takes place one time before the time evolution, which further accelerates the computation. The most costly part in the implementation comes from the  evaluation of nonlinear reaction terms.

\bigskip

%%%%%%%%%%%%%%%%%%%%%%%%%%%%%%%%%%%%%%%%%%%%%%
\noindent {\bf Example 1.} In this example, we perform comparison study between our proposed (1) Krylov IIF IPDG method on sparse grids, (2) Krylov IIF IPDG method on regular full grids, and (3) Runge-Kutta (RK) IPDG method on sparse grids. We consider a linear
$d$-dimensional diffusion PDE
\begin{align}
\begin{cases}
u_t (\mathbf{x}, t) = k \triangle_{\mathbf{x}} u,    \\
u(\mathbf{x},0)= \prod\limits_{i=1}^{d} \sin(2\pi x_i).
\end{cases}
\end{align}
The PDE is defined on domain $[0, 1]^d$ with periodic boundary condition and $k = \frac{1}{4d\pi^2 }$. In this case, the exact solution is $u(\mathbf{x},t)= e^{-t}\prod\limits_{i=1}^{d} \sin(2\pi x_i)$. We perform computations for the 2D and 3D cases.

%\todo{condition number, property of matrix, cpu cost scale}
%
%\todo{points: compare with full grid, dof comparison, matrix condition number comparison, krylov-solver related comparison (on M and other)}
%
%\todo{actually i do not think the  comparison with explicit rk is necessary. i think it might be good to put some cfl studies with different rk method, what is the associated time step restriction for different discreatizations. Is cfl=0.001 reasonable choice?}
%
%\todo{except for the last example, no source term in the tests are stiff. is there any test with stiff nonlinear term}
%
%
%
%\clearpage
%\newpage
The stiffness of the matrix $A$ resulted from the DG spatial discretizations of the diffusion operator is studied.
In the numerical implementation, we use uniform mesh partition in each direction and denote the mesh size as $h_N = 2^{-N}$ where $N$ represents the number of mesh level, then the CFL number for an explicit time integrator is defined as
\begin{equation}
CFL=kd \frac{ \Delta t}{h_N^2}.
\end{equation}
Explicit second order RK (RK2) and third order RK (RK3) methods are first used to show the severe stability constraint due to the stiffness of the diffusion operator.
The numerical CFL numbers to ensure stable computations are reported in Table \ref{table_CFL} for the 2D and 3D cases.
The numerical CFL numbers we obtained for IPDG methods on regular full grids are the same when the mesh is refined, which is consistent with the fact for regular DG methods. While for the sparse grid IPDG methods, the CFL numbers vary with different mesh levels. Comparing with full grid cases, the CFL constraints are less restrictive. Similar observations are made in \cite{tao} for sparse grid DG method for convection operators.   However, the CFL numbers are still very small and tiny time step sizes are needed to achieve stable computations.
 %The CFL numbers of computations on sparse grids are roughly twice larger in 2D cases and three times in 3D cases than those on regular full grids.

To complete the study, in Table \ref{eig1}, we list the largest negative eigenvalues $\lambda_0(A)$ of matrices $A$, which are the $P^1$ and $P^2$ DG discretization matrices for the diffusion operator on the domain $[0, 1]^2$ or $[0, 1]^3$ with periodic boundary conditions, on successively refined mesh levels.  It is obvious to see that the matrices $A$ have quite large magnitude eigenvalues, for different mesh levels on both sparse grids and regular full grids.
When a regular implicit time integrator, such as the backward Euler method, is employed, we need to solve a linear system with the coefficient matrix $(I -\Delta t A)$.
In Table  \ref{eig1}, the condition numbers of the matrices $(I -\Delta t A)$ are presented for different refined mesh levels, with {$\Delta t = h_N$}.   We can see that the magnitudes of these condition numbers are also quite large, especially for more refined mesh levels. This introduces additional numerical challenges to solve the linear system efficiently. It is also observed in Table \ref{eig1} that the numbers of unknown degree of
freedom in the spatial discretizations are saved a lot when the sparse grids are used rather than regular full grids.
Hence both CPU times and computer storages can be saved significantly to solve high dimensional problems by using
sparse grids. The discussions above justify the adoption of Krylov IIF methods as the time integrator.
Next we show that the Krylov IIF methods can resolve the stiffness of this DG spatial operator very well.

%%%%%%%%%%%

\begin{table} [htb]
\begin{center}
\caption {Numerical CFL numbers for 2D and 3D IPDG methods, with RK2 and RK3. { N is the mesh level and DOF denotes the degrees of freedom of DG approximation. } }
\begin{tabular}{ccccccccc}
  \cline{2-9}
   &  \multicolumn{1}{c}{\multirow{2}{*}{N}}  & \multicolumn{1}{c}{\multirow{2}{*}{DOF}}   & $P^1$  &  & & \multicolumn{1}{c}{\multirow{2}{*}{DOF}} & $P^2$ \\
  \cline{4-5}
  \cline{8-9}
  &  & & RK2 & RK3  &  & &  RK2 & RK3  \\
 \cline{2-9}
 &    Full grid \\
 %\cline{2-9}
  & --   &  --   & 0.00416  & 0.00523   & & --  & 0.00208  &  0.00262 \\
\cline{2-9}
&    2D sparse grid \\
 %\cline{2-9}
  & 3   &  80   & 0.00724  & 0.00900  & & 180 & 0.00388  &  0.00487 \\
  & 4   &  192   & 0.00778 & 0.00978  & & 432 &0.00403  &  0.00506 \\
  & 5   &  448   & 0.00806 & 0.01013  & & 1008 &0.00410  &  0.00515 \\
  & 6   &  1024   & 0.00820 & 0.01030  & & 2304 &0.00413  &  0.00519 \\
  & 7   &  2304   & 0.00826 & 0.01039  & & 5184 &0.00415  &  0.00521 \\
  & 8   &  5120   & 0.00830 & 0.01043  & & 11520 &0.00416  &  0.00522 \\
\cline{2-9}
&    3D sparse grid \\
 %\cline{2-9}
  & 3   &  304   & 0.00953  & 0.01197  & & 1026 & 0.00540 &  0.00678 \\
   & 4   &  832   & 0.01091  & 0.01370  & & 2808 & 0.00583 &0.00733 \\
    & 5   &  2176  & 0.01170  & 0.01469  & & 7344 &  0.00605&  0.00760 \\
     & 6   &  5504   & 0.01210  & 0.01520  & & 18576  & 0.00615  &  0.00773 \\
 %     & 7   &  13568   & 0.01230  & 0.01545  & & -- & --  &  -- \\
%      & 8   &  --   & --  & -- & & -- & --  &  -- \\
\cline{2-9}
  \end{tabular}\label{table_CFL}
\end{center}
\end{table}

%%%%%%%%%%

%\clearpage
%\newpage
%It could be also challenging for a traditional implicit method to resolve the stiffness of the DG spatial operator.

\begin{table} [htb]
\begin{center}
\caption {Eigenvalues and condition numbers for the discretization matrix A for the diffusion operator.  { N is the mesh level and DOF denotes the degrees of freedom of DG approximation. } }
\begin{tabular}{ccccccccc}
  \cline{2-9}
   &  \multicolumn{1}{c}{\multirow{2}{*}{N}}  & \multicolumn{1}{c}{\multirow{2}{*}{DOF}}   & $P^1$  &  & & \multicolumn{1}{c}{\multirow{2}{*}{DOF}} & $P^2$ \\
  \cline{4-5}
  \cline{8-9}
  &  & & cond(I-$\Delta t$A)  & $\lambda_0(A)$ &  & &  cond(I-$\Delta t$A) & $\lambda_0(A)$  \\
 \cline{2-9}
 &    2D sparse grid \\
 %\cline{2-9}
  & 3   &  80  & 1.12E+04  & -3.53E+04   & & 180 & 2.97E+04  &  -6.60E+04 \\
% \hline
& 4 & 192 & 2.45E+04  & -1.31E+05  & &432 & 8.41E+04 & -2.54E+05 \\
 & 5   & 448 &7.39E+04 &  -5.08E+05  & & 1008 & 2.22E+05 & -9.98E+05\\
 & 6  &  1024  & 2.11E+05  & -2.00E+06 & & 2304 &  5.67E+05 & -3.96E+06 \\
  & 7  &  2304  & 5.36E+05  &   -7.92E+06 & & 5184 & 1.24E+06 & -1.58E+07 \\
   & 8  &  5120  & 1.20E+06 & -3.16E+07&   & 11520 &2.80E+06 &  -6.30E+07 \\
\cline{2-9}
 &    2D full grid \\
%  \cline{2-9}
  & 3   &  256  & 3.03E+04  & -6.14E+04  &  & 576 & 7.01E+04 &-1.23E+05 \\
% \hline
& 4 & 1024 & 9.21E+04  & -2.46E+05  & & 2304 & 1.78E+05 & -4.91E+05 \\
 & 5   & 4096 & 2.06E+05  &  -9.83E+05 & & 9216 & 4.41E+05 & -1.96E+06 \\
 & 6  &  16384  & 4.45E+05 &  -3.93E+06  &  & 36864 & 1.01E+06 &-7.86E+06 \\
%  & 7  &  --  & -- \\
  % & 8  &  --  & -- \\
 \cline{2-9}
  &  3D sparse grid \\
% \cline{2-9}
 & 3   &  304 & 1.77E+04  & -4.03E+04 & & 1026 &3.99E+04 &  -7.11E+04 \\
% \hline
& 4 &832  & 3.44E+04 & -1.41E+05 & & 2808 &  1.09E+05 &  -2.63E+05 \\
 & 5   & 2176 &8.15E+04 & -5.25E+05  & &7344 & 3.42E+05 & -1.02E+06  \\
 & 6  &  5504  & 2.19E+05  &  -2.03E+06 &  & 18576 & 1.12E+06 & -3.99E+06\\
 % & 7  &  13568  & 5.90E+05 & -7.99E+06\\
 %  & 8  &  32768  & 1.82E+06 \\
 \cline{2-9}
 &   3D  full grid  &  & \\
% \cline{2-9}
 & 2   & 512 &1.41E+04 & -2.30E+04 & & 1728 &5.47E+04 & -4.60E+04 \\
 & 3 & 4096 & 9.00E+04 & -9.21E+04 & &  13824 & 2.18E+05 & -1.84E+05 \\
%& 4 & 32768 & 3.12E+05  & -3.69E+05  \\
% & 5   & -- &--   \\
% & 6  &  --  & -- \\
 % & 7  &  --  & -- \\
  % & 8  &  --  & -- \\
  \cline{2-9}
  \end{tabular}\label{eig1}
\end{center}
\end{table}

%\clearpage
%\newpage
It is important to note that the errors generated by the Krylov subspace approximations
 have become much smaller than the truncation errors of the numerical schemes if the dimension of Krylov subspace is large enough, but still much smaller than the dimension of matrix $A$.
 This is confirmed by the following numerical results which show the numerical errors of computations by Krylov IIF schemes with different dimensions of Krylov subspaces. For a fixed spatial mesh which has the refined level $N=7$, we present the numerical errors at $T=0.6$ in Table \ref{krylov} by sparse grid Krylov IIF DG schemes if different dimensions $M$ of the Krylov subspace are used.
 The numerical results show that in both 2D and 3D cases, as long as the dimension of Krylov subspace is large enough, for instance, larger than $M = 25$ in this example, the numerical errors  only have slight differences for different $M$.
 %{\color{red} \bf Yuan: check the error 1.40E-01, should it be {\color{blue}1.40E-2}?}
 This is because the numerical errors of this example are mainly due to the truncation
errors of the DG spatial discretizations when $M \geq 25$,
 and the numerical errors due to the Krylov subspace approximations are negligible.
 Results in Table \ref{krylov} also show that
 large time step size proportional to the spatial grid size, i.e., $\Delta t = h_N $, can be used for this parabolic PDE to obtain stable computations. Actually since this example only has linear diffusion terms, in the time direction the Krylov IIF methods can integrate
the DG spatial discretization operator almost ``exactly'' up to the numerical errors of the Krylov subspace approximations. This is also confirmed in Table \ref{krylov}.
A very large time step size $\Delta t = 0.6 $ (i.e., one time step is used to reach the final time $T$) is used, and
there are very little differences between the magnitudes of numerical errors from the computations with $\Delta t = h_N$ and $\Delta t = 0.6 $, if $M \geq 25$.

\begin{table} [htb]
\begin{center}
\caption {$L^2$ numerical errors if different dimensions M of the Krylov subspace are used for refinement level $N=7$ and final time $T=0.6$. { DOF denotes the degrees of freedom of DG approximation.} }
\begin{tabular}{ccccccccccc}
  \cline{2-11}
   &    & &     & $ P^1$  &  & &  & $P^2$ \\
  \cline{3-6}
  \cline{8-11}
  &  &DOF    & $\Delta t = 0.6 $  &  & $\Delta t = h_N $  & &   DOF   &  $\Delta t = 0.6 $ &   & $\Delta t = h_N $  \\
 \cline{2-11}
 &    2D  sparse   &  &\\
 \cline{2-11}
&   $M=10$   &  2304  & 6.29E-03   &   & 9.02E-04 &   & 5184  & 3.01E-06  && 3.51E-06 \\
 &   $M=25$   &  2304  & 8.51E-04   &   & 8.39E-04 &   & 5184  & 3.46E-06  && 3.53E-06   \\
  &   $M=100$   &  2304  & 8.38E-04   &   & 8.39E-04 &   & 5184  & 3.51E-06  && 3.57E-06     \\
   &   $M=250$   &  2304  & 8.39E-04   &   & 8.40E-04 &   & 5184  & 3.55E-06  && 3.57E-06   \\
   &   $M=500$   &  2304  & 8.40E-04   &   &  8.40E-04&   & 5184  & 3.57E-06  & &3.57E-06  \\
\cline{2-11}
&    3D  sparse   &  &\\
 \cline{2-11}
&   $M=10$   &  13568  & 1.94E-01   &   &  6.04E-02&   &  45792 & 3.89E-05 & &2.53E-05  \\
&   $M=25$   &  13568  & 1.40E-02  &   & 1.39E-02 &   &  45792 & 2.26E-05  && 2.53E-05 \\
&   $M=100$   &  13568  & 1.16E-02   &   & 1.15E-02 &   &  45792 & 2.52E-05 &&2.60E-05  \\
&   $M=250$   &  13568  & 1.15E-02   &   &   1.15E-02  & &  45792 & 2.53E-05 &&2.61E-05 \\
&   $M=500$   &  13568  & 1.15E-02   &   &    1.15E-02 & &  45792 & 2.60E-05  && 2.61E-05 \\
\cline{2-11}
   \end{tabular}\label{krylov}
\end{center}
\end{table}

%\clearpage
%\newpage
%{\color{red} \bf Yuan: is M=25 here?}
We then investigate the performance of the proposed Krylov IIF sparse grid IPDG methods and carry out the convergence study. In Table \ref{more_step}, $L^2$ errors with accuracy order at $T=2$ are presented, with time step size $\Delta t = h_N$ and { $M=25$}. We can clearly observe that the accuracy order is close to order $k+1$ in each case. The CPU time of time evolution is also reported in Table \ref{more_step}.  Large time step size computations are achieved and the stiffness of DG operator is resolved well. We observe that the CPU time approximately linearly depends on the product of the number of degrees of freedom and the total time steps in each case, which implies linear dependence of CPU cost per time step.

\begin{table} [htb]
\begin{center}
\caption {Accuracy test for {\bf Example 1}. Krylov IIF2 and IIF3 with sparse grid IPDG in space. T=2.0, $\Delta t = h_N$. N is the mesh level and DOF denotes the degrees of freedom of DG approximation.}
\begin{tabular}{cccccccccccc}
  \cline{2-12}
   &  \multicolumn{1}{c}{\multirow{2}{*}{N}}  & \multicolumn{1}{c}{\multirow{2}{*}{DOF}}   & $P^1$  &  && &\multicolumn{1}{c}{\multirow{2}{*}{N}}  & \multicolumn{1}{c}{\multirow{2}{*}{DOF}} & $P^2$ \\
  \cline{4-6}
  \cline{10-12}
  &  & & $L^2$ error  & Order & CPU  & &  & &  $L^2$ error & Order &  CPU  \\
 \cline{2-12}
 &    2D  sparse   &  &\\
 %\cline{2-9}
 & 4 & 192 & 2.60E-02   & --   & 0.03 & & 3& 180 &2.22E-03   &  -- & 0.01 \\
% \hline
& 5 & 448 & 7.42E-03   & 1.81   & 0.11 & & 4& 432 &2.76E-04   &  3.01 & 0.07 \\
& 6 & 1024 & 1.91E-03   & 1.96   & 0.51 & & 5& 1008 &3.93E-05   &  2.81 & 0.30 \\
& 7 & 2304 & 4.77E-04   & 2.00   & 2.28 & & 6& 2304 &5.94E-06   &  2.73 & 1.44\\
& 8 & 5120 & 1.18E-04   & 2.01   & 11.23  & & 7& 5184 &8.77E-07   &  2.76 & 7.86 \\
& 9 & 11264 & 2.90E-05   &2.03   & 53.93 & & 8& 11520 &1.26E-07   &  2.80 & 39.31 \\
%& 10 & 24576 &    &   &  250.59& & -- &--   &  -- \\
\cline{2-12}
%&    2D  full   &  &\\
% %\cline{2-9}
% & 3   &  256  & 7.13E-03   & --   & 0.02 & & 3 &576   &  2.69E-04 & -- & 0.04  \\
%  & 4   &  1024 & 1.81E-03  & 1.97   & 0.14 & & 4 &2304    &  3.34E-05 & 3.01 & 0.40  \\
%  & 5   &  4096  & 4.55E-04   & 1.99   & 1.32 & & 5 &9216   &  4.17E-06 & 3.00 & 4.21  \\
%  & 6   &  16384  & 1.14E-04   & 2.00   & 11.98 & & 6 &36864   &  5.22E-07 & 3.00 & 40.81  \\
% %  & 7   &  65536  & --  & --   &130.71& & 7 &147456   &  -- & -- & --    \\
  \cline{2-12}
  &    3D  sparse   &  &\\
 %\cline{2-9}
 & 6   &  5504  & 2.54E-02  & --   &2.16& & 5 &7344    &  2.40E-04 & -- & 2.94  \\
  & 7   &  13568  & 6.40E-03  & 1.99   &12.48& & 6 &18576    &  3.80E-05 & 2.66 & 15.64  \\
 & 8   &  32768  & 1.62E-03  & 1.99   &89.55& & 7 &45792   &  6.29E-06 & 2.60 & 77.29 \\
  & 9   &  77824  & 3.94E-04  & 2.04   &506.41& & 8 & 110592    &  1.01E-06 & 2.64 & 505.16    \\
 % &10   &  --  & --  & --   &--& & 9 &--    &  -- & -- & --  \\
  \cline{2-12}
%   &    3D  full   &  &\\
% %\cline{2-9}
% & 3   &  4096  & 5.18E-03  & --   &0.29 & & 3 &13824    &  2.30E-04 & -- & 1.36  \\
%  & 4   & 32768 &1.31E-03 & 1.98 & 7.01   & & 4 &110592    &  2.88E-05 & 3.00 & 42.65  \\
% & 5   &  262144  & --  & --   &--& & 6 &--    &  -- & -- & --  \\
% % & 3   &  --  & --  & --   &--& & 3 &--    &  -- & -- & --    \\
  \cline{2-12}
   \end{tabular}\label{more_step}
\end{center}
\end{table}

%\clearpage
%\newpage
%%%%%%%%%%%%%%%%%%%%%%%%%%%%%%%%%%%%%%%%%%%%
\bigskip
\noindent {\bf Example 2.} We consider a $d$-dimensional linear reaction-diffusion equation
\begin{align}
\begin{cases}
u_t = k \triangle_{\mathbf{x}} u +u-e^{-t}\prod\limits_{i=1}^{d} \sin(2\pi x_i),  \\
u(\mathbf{x},0)= \prod\limits_{i=1}^{d} \sin(2\pi x_i),
\end{cases}
\end{align}
on domain $[0, 1]^d$ with periodic boundary conditions. The exact solution is $u(\mathbf{x},t)= e^{-t}\prod\limits_{i=1}^{d} \sin(2\pi x_i)$.
Again, computations are performed for the 2D ($d=2$) and 3D ($d=3$) cases.
$k = \frac{1}{8\pi^2}$ in 2D and $k = \frac{1}{12\pi^2}$ in 3D.  The numerical simulations are performed up to $T=1.0$ for the 2D case and $T=0.4$ for the 3D case, with the Krylov subspace dimension $M=25$ and time step size $\Delta t = h_N$.  $L^2$ errors and the orders of accuracy are listed in Table \ref{2d3dlinear} for the second order Krylov IIF method with $P^1$ sparse grid IPDG and the third order Krylov IIF method with $P^2$ sparse grid IPDG. Similar as Example 1, the desired numerical orders of accuracy for the Krylov IIF DG schemes are attained in this example.

\begin{table} [htb]
\begin{center}
\caption {Accuracy test for {\bf Example 2}. Krylov IIF2 and IIF3 with sparse grid IPDG in space. $\Delta t = h_N$.
N is the mesh level and DOF denotes the degrees of freedom of DG approximation.}
\begin{tabular}{cccccccccc}
  \cline{2-10}
   &  \multicolumn{1}{c}{\multirow{2}{*}{N}}  & \multicolumn{1}{c}{\multirow{2}{*}{DOF}}   & $P^1$  &  & &\multicolumn{1}{c}{\multirow{2}{*}{N}}  & \multicolumn{1}{c}{\multirow{2}{*}{DOF}} & $P^2$ \\
  \cline{4-5}
  \cline{9-10}
    & & & $L^2$ error  & Order   &   & &&   $L^2$ error & Order   \\
 \cline{2-10}
  &  \multicolumn{2}{c}{2D sparse}  & T=1.0\\
% \cline{3-9}
  & 3 & 80 & 1.96E-01   & --   & & 3& 180 &6.20E-03    &  --  \\
   & 4 & 192 & 6.86E-02  & 1.51   & & 4& 432 &7.58E-04     &  3.03  \\
   & 5 & 448 & 1.89E-02  & 1.86   & & 5& 1008 &1.07E-04     &  2.83 \\
   & 6 & 1024 & 5.25E-03  & 1.85   & & 6& 2304 &1.60e-05    &  2.74  \\
   & 7 & 2304 & 1.21E-03  & 2.12   & & 7& 5184 &2.37e-06     &  2.75  \\
  \cline{2-10}
   &  \multicolumn{2}{c}{3D sparse}  & T=0.4\\
    & 5 & 2176 & 2.40E-01   & --   & & 3  &  1026  & 1.57e-02 & --  \\
     & 6 & 5504 & 9.55E-02   & 1.33   & & 4    &  2808  & 6.88e-03 &1.19   \\
      & 7 & 13568 &1.63e-02   & 2.55  & & 5   &  7344  & 1.11e-03 & 2.64   \\
    &    8 &   32768 & 3.89e-03   & 2.06  & &6  &  18576  & 1.86e-04 & 2.57  \\
        & 9 &  77824  & 9.27e-04   & 2.07  & &7 &  45792  & 3.09e-05 &2.59  \\
         \cline{2-10}
   \end{tabular}\label{2d3dlinear}
\end{center}
\end{table}

%%%%%%%%%%%%%%
%\clearpage
%\newpage
\bigskip
\noindent {\bf Example 3.} We consider a $d$-dimensional nonlinear reaction-diffusion equation
\begin{align}
\begin{cases}
u_t = k \triangle_{\mathbf{x}} u +u^2-e^{-2t}\prod\limits_{i=1}^{d}  \sin^2(2\pi x_i), \\
u(\mathbf{x},0)=\prod\limits_{i=1}^{d} \sin(2\pi x_i).
\end{cases}
\end{align}
The PDE is defined on the domain $[0, 1]^d$ with zero Dirichlet boundary conditions.
The exact solution is $u(\mathbf{x},t)= e^{-t}\prod\limits_{i=1}^{d} \sin(2\pi x_i)$. We perform computations for the 2D and the 3D cases.
$k = \frac{1}{8\pi^2}$ in the 2D case, and $k = \frac{1}{12\pi^2}$ in the 3D case. The numerical simulations are carried up to $T=1.0$ for the 2D case and $T=0.2$ for the 3D case, with the Krylov subspace dimension $M=25$ and the time step size $\Delta t = h_N$. The nonlinear system resulting from the nonlinear reaction term is solved by the Newton's method. Similar as the previous examples, we present $L^2$ errors and numerical orders of accuracy for this example in Table \ref{2d3dnonlinear} for the second order Krylov IIF method with $P^1$ sparse grid IPDG and the third order Krylov IIF method with $P^2$ sparse grid IPDG. Again, the desired numerical orders of accuracy are obtained, and large time step size (proportional to the spatial grid size) computations are achieved for this multidimensional nonlinear parabolic PDE.

\begin{table} [htb]
\begin{center}
\caption {Accuracy test for {\bf Example 3}. Krylov IIF2 and IIF3 schemes with sparse grid IPDG in space. $\Delta t = h_N$. N is the mesh level and DOF denotes the degrees of freedom of DG approximation.}
\begin{tabular}{cccccccccc}
  \cline{2-10}
   &  \multicolumn{1}{c}{\multirow{2}{*}{N}}  & \multicolumn{1}{c}{\multirow{2}{*}{DOF}}   & $P^1$  &  & &\multicolumn{1}{c}{\multirow{2}{*}{N}}  & \multicolumn{1}{c}{\multirow{2}{*}{DOF}} & $P^2$ \\
  \cline{4-5}
  \cline{9-10}
    & & & $L^2$ error  & Order   &   & &&   $L^2$ error & Order   \\
 \cline{2-10}
  &  \multicolumn{2}{c}{2D sparse}  & T=1.0\\
% \cline{3-9}
  & 3 & 80 & 1.96e-01   & --   & & 3& 180 &5.96e-03    &  --  \\
  & 4 & 192 & 4.70e-02   & 2.10   & & 4& 432 &7.33e-04    &  3.02  \\
   & 5 & 448 & 1.22e-02    & 1.94   & & 5& 1008 &1.16e-04 &  2.66 \\
   & 6 & 1024 & 3.10e-03    & 1.98   & & 6& 2304 &1.62e-05  &  2.83\\
    & 7 & 2304 &  7.80e-04   & 1.99   & & 7& 5184 &2.43e-06  &  2.74  \\
  \cline{2-10}
   &  \multicolumn{2}{c}{3D sparse}  & T=0.2\\
    & 5 & 2176 &  2.90e-01 &--  & & 4& 2808&8.57e-03    &  --  \\
     & 6 & 5504 & 9.72e-02  & 1.58    & & 5& 7344&1.36e-03 & 2.65  \\
      & 7 & 13568 & 1.79e-02  & 2.44   & & 6& 18576&2.29e-04 & 2.57  \\
       & 8 & 32768 & 4.13e-03  & 2.12   & & 7& 45792  & 3.82e-05 & 2.58  \\
  %      & -- & -- & --   & --   & & --& --&--    &  --  \\
    \cline{2-10}
   \end{tabular}\label{2d3dnonlinear}
\end{center}
\end{table}

%%%%%%%%%%%%%%

%\clearpage
%\newpage
\bigskip
\noindent {\bf Example 4.} We consider reaction-diffusion systems with stiff reaction terms in 2D and 3D spatial domains
\begin{align}
\begin{cases}
u_t = ka \triangle_{\mathbf{x}} u -bu +v, \\
v_t = ka \triangle_{\mathbf{x}} v -cv.
\end{cases}
\end{align}
The computational domains are $[0, 1]^2$ for the 2D case and $[0, 1]^3$ for the 3D case, with periodic boundary conditions. $k$, $a$, $b$ and $c$ are all constants. $k=\frac{1}{4d\pi^2}$ where $d$ denotes the spatial dimensionality. The initial condition is taken as
\begin{align}
\begin{cases}
u(\mathbf{x}, 0) = 2 \prod\limits_{i=1}^{d} \cos(2\pi x_i), \\
v(\mathbf{x}, 0) = \prod\limits_{i=1}^{d} \cos(2\pi x_i).
\end{cases}
\end{align}
The exact solution is
\begin{align}
\begin{cases}
u(\mathbf{x}, t) = (e^{-(b+a)t} +e^{-(c+a)t}) \prod\limits_{i=1}^{d} \cos(2\pi x_i), \\
v(\mathbf{x}, t) = (b-c)e^{-(c+a)t}\prod\limits_{i=1}^{d} \cos(2\pi x_i).
\end{cases}
\end{align}
We take parameters $c=a=1$ and $b=100$ such that the system involves stiff reaction terms. The system is solved by the Krylov IIF2 and Krylov IIF3 schemes with sparse grid IPDG spatial discretizations, with Krylov subspace dimension $M=25$ and time step size $\Delta t= h_N$. $L^2$ errors and the numerical orders of accuracy reported in Table \ref{2d3dsystem} show that the methods attain the desired accuracy. %for most of cases, while the numerical order of accuracy for the
The numerical accuracy order of the $P^2$ case of the 3D computation is slightly less than $3$. Similar as other examples, large time step size (proportional to the spatial grid size) computations are obtained for the stiff reaction-diffusion system defined on
high spatial dimensions.

\begin{table} [htb]
\small
\begin{center}
\caption {Accuracy test for {\bf Example 4}. Krylov IIF2 and IIF3 schemes with sparse grid IPDG in space. $\Delta t = h_N$. N is the mesh level and DOF denotes the degrees of freedom of DG approximation.}
\begin{tabular}{ccccccccccccc}
  \cline{1-13}
     \multicolumn{1}{c}{\multirow{2}{*}{N}}  & \multicolumn{1}{c}{\multirow{2}{*}{DOF}}   & $P^1$  & && & &\multicolumn{1}{c}{\multirow{2}{*}{N}}  & \multicolumn{1}{c}{\multirow{2}{*}{DOF}} & $P^2$ \\
  \cline{3-7}
  \cline{10-13}
     & & $u$-error  & Order   & $v$-error & Order&  & &&   $u$-error & Order  & $v$-error & Order  \\
 \cline{1-13}
    \multicolumn{2}{c}{2D sparse}  & T=1.0\\
    5 & 448 & 1.44E-03   & --   & 1.42E-01 & --& & 5   &  1008  & 4.50E-05& --& 4.45E-03& -- \\
     6 & 1024 & 3.75E-04   & 1.94   & 3.72E-02 & 1.94& &6    &  2304  & 8.13E-06 & 2.47& 8.05E-04& 2.47 \\
     7 & 2304 & 1.06E-04   & 1.82   & 1.05E-02 & 1.82& &7    &  5184  & 1.74E-06 & 2.22& 1.72E-04& 2.22 \\
     8 & 5120 & 2.80E-05   & 1.92   & 2.77E-03 & 1.92& &8    &  11520  & 2.42E-07 & 2.85& 2.58E-05& 2.74 \\
%% \cline{3-9}
    \cline{1-13}
       \multicolumn{2}{c}{3D sparse}  & T=0.2\\
    5 & 2176 & 1.18E-02   & --   & 1.17E-00&-- & &4    & 2808   & 2.01E-03 & --& 2.01E-01& -- \\
    6 & 5504 & 6.46E-03   & 0.87   & 6.40E-01 & 0.87& &5    &  7344  & 5.61E-04 & 1.84& 5.57E-02& 1.85 \\
    7 & 13568 & 1.86E-03   & 1.80  & 1.84E-01 & 1.80& &6    &  18576  & 1.20E-04 & 2.23& 1.19E-02&2.23 \\
    8 & 32768 & 6.46E-04   & 1.52   & 6.40E-02 & 1.52& &7    &  45792 & 2.29E-05 & 2.39& 2.26E-03& 2.39 \\
     9 & 77824 & 1.96E-04   & 1.72   & 1.94E-02 & 1.72& &8    &  110592  & 4.32E-06 & 2.40& 4.28E-04& 2.40 \\
  %      & -- & -- & --   & --   & - & --& --&--    &  --  \\
    \cline{1-13}
   \end{tabular}\label{2d3dsystem}
\end{center}
\end{table}

%%%%%%%%%%%%%%
%\clearpage
%\newpage
\bigskip
\noindent {\bf Example 5.}  \textbf{Schnakenberg model.} The Schnakenberg system \cite{schnakenberg1979simple} has been used to model the spatial distribution of morphogens. It has the following form
\begin{align}
 \begin{cases}\label{SMeqn1}
 \vspace{0.1in}
 \displaystyle{\frac{\partial C_a}{\partial t}}=  D_1 \nabla^2 C_a+\kappa(a-C_a+C_a^2C_i),  \\
                  %               \vspace{0.1in}
 \displaystyle{\frac{\partial C_i}{\partial t}}=  D_2 \nabla^2 C_i+\kappa(b-C_a^2C_i),
  \end{cases}
  \end{align}
where $C_a$ and $C_i$ represent the concentrations of activator and inhibitor, with $D_1$ and $D_2$ as the diffusion coefficients respectively. $\kappa$, $a$ and $b$ are rate constants of biochemical reactions. Following the setup in \cite{hundsdorfer2013numerical, zhu2009application}, we take the initial conditions as
\begin{align}\label{SMeqn2}
\displaystyle C_a(x,y,0)& = a + b + 10^{-3}e^{-100((x-\frac{1}{3})^2+(y-\frac{1}{2})^2)}, \\
\displaystyle C_i (x,y,0)& =  \frac{b}{(a+b)^2},
\end{align}
and the boundary conditions are periodic. The parameters are $\kappa = 100$, $a = 0.1305$, $b = 0.7695$, $D_1 = 0.05$ and $D_2 = 1$. The computational domain is $[0, 1] ^2$. We simulate this problem by the third order Krylov IIF scheme with $P^2$ sparse grid IPDG, with $N=8$, time step size $\Delta t = h_N$ and the Krylov subspace dimension $M=100$.  The numerical results for the concentration of the activator $C_a$ at different times are shown in Figure \ref{fig_pig}, from which we can observe that the proposed method is capable of producing the similar spot-like patterns as these in the literature \cite{zhu2009application, christlieb2015high1}. It is interesting to note that in order to observe the pattern formation with good resolution, Krylov subspace used in this example needs to have a larger dimension than those in previous examples.

\begin{figure}[!htb]
  %\centering
  \begin{minipage}[b]{0.3\textwidth}
    \centerline{
    \includegraphics[width=2.0in,angle=0,scale=1.0]{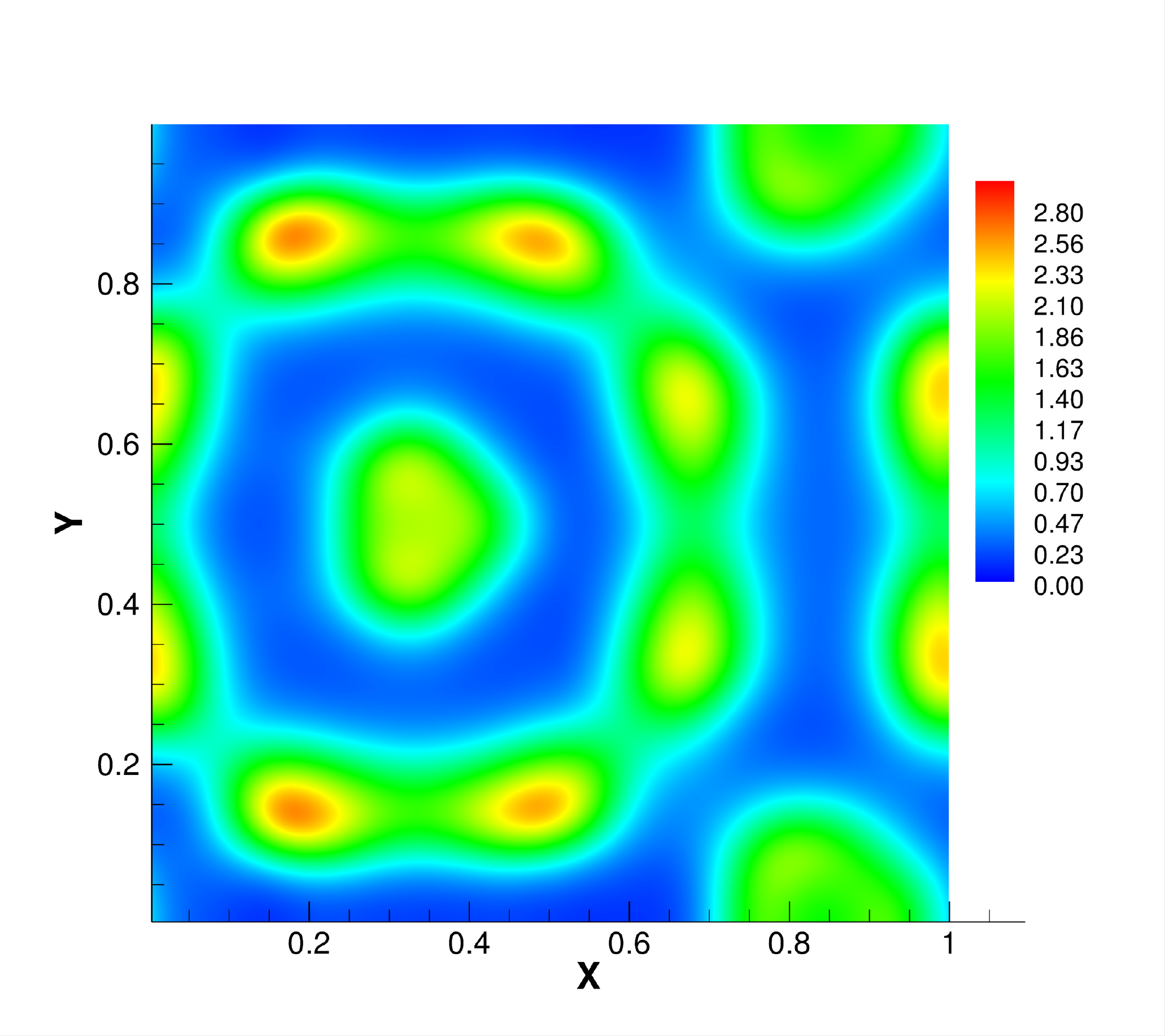}}
   % \caption{Caption 3}
      \qquad \qquad t=0.5
    \medskip
  \end{minipage}%
  \hspace{0.04\linewidth}%
  \begin{minipage}[b]{0.3\textwidth}
    \centerline{
    \includegraphics[width=2.0in,angle=0,scale=1.0]{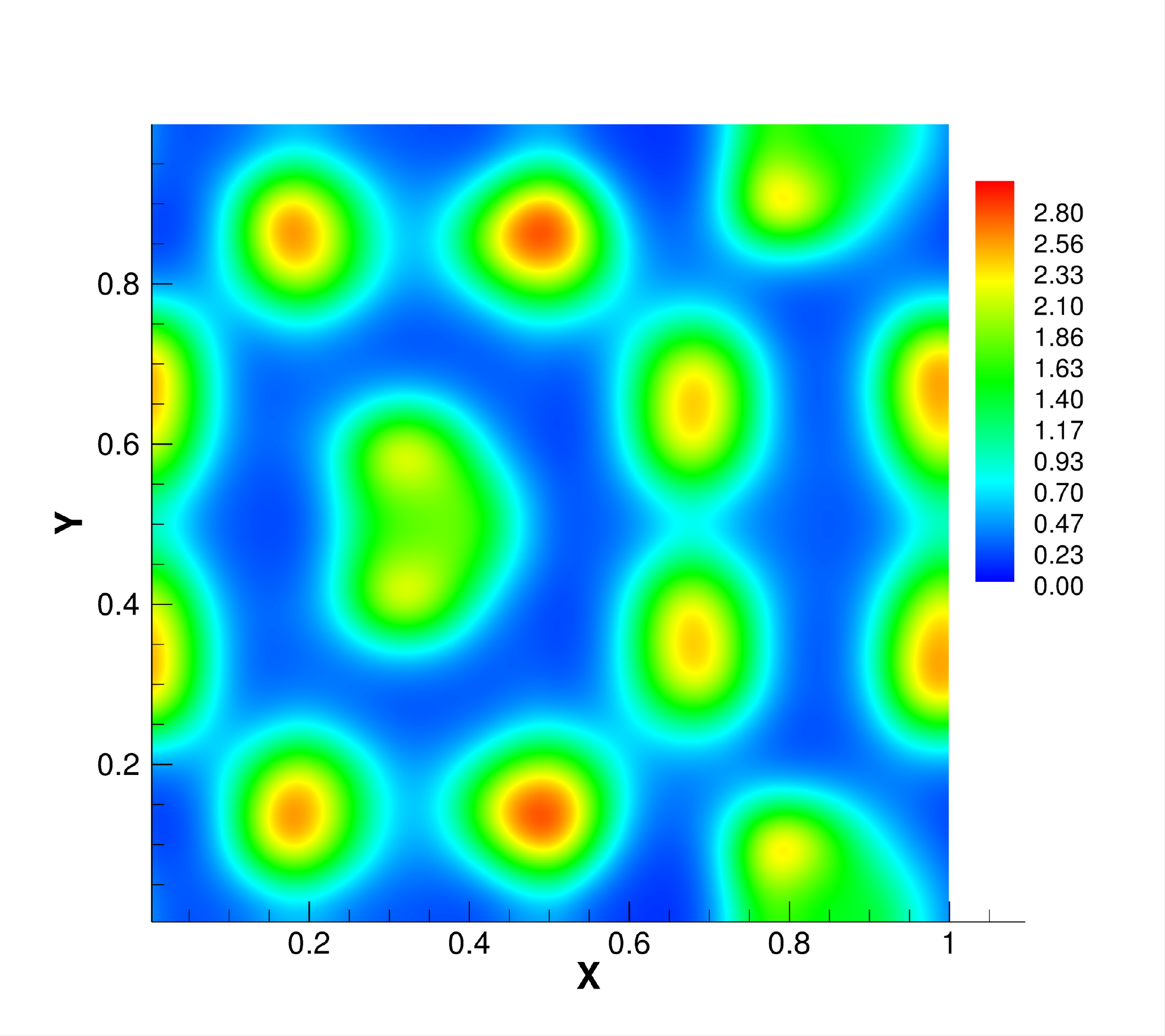}}
  %  \caption{Caption 1}
  \qquad \qquad t=0.6
  \medskip
  \end{minipage}
  \hspace{0.03\linewidth}%
  \begin{minipage}[b]{0.3\textwidth}
    \centerline{
    \includegraphics[width=2.0in,angle=0,scale=1.0]{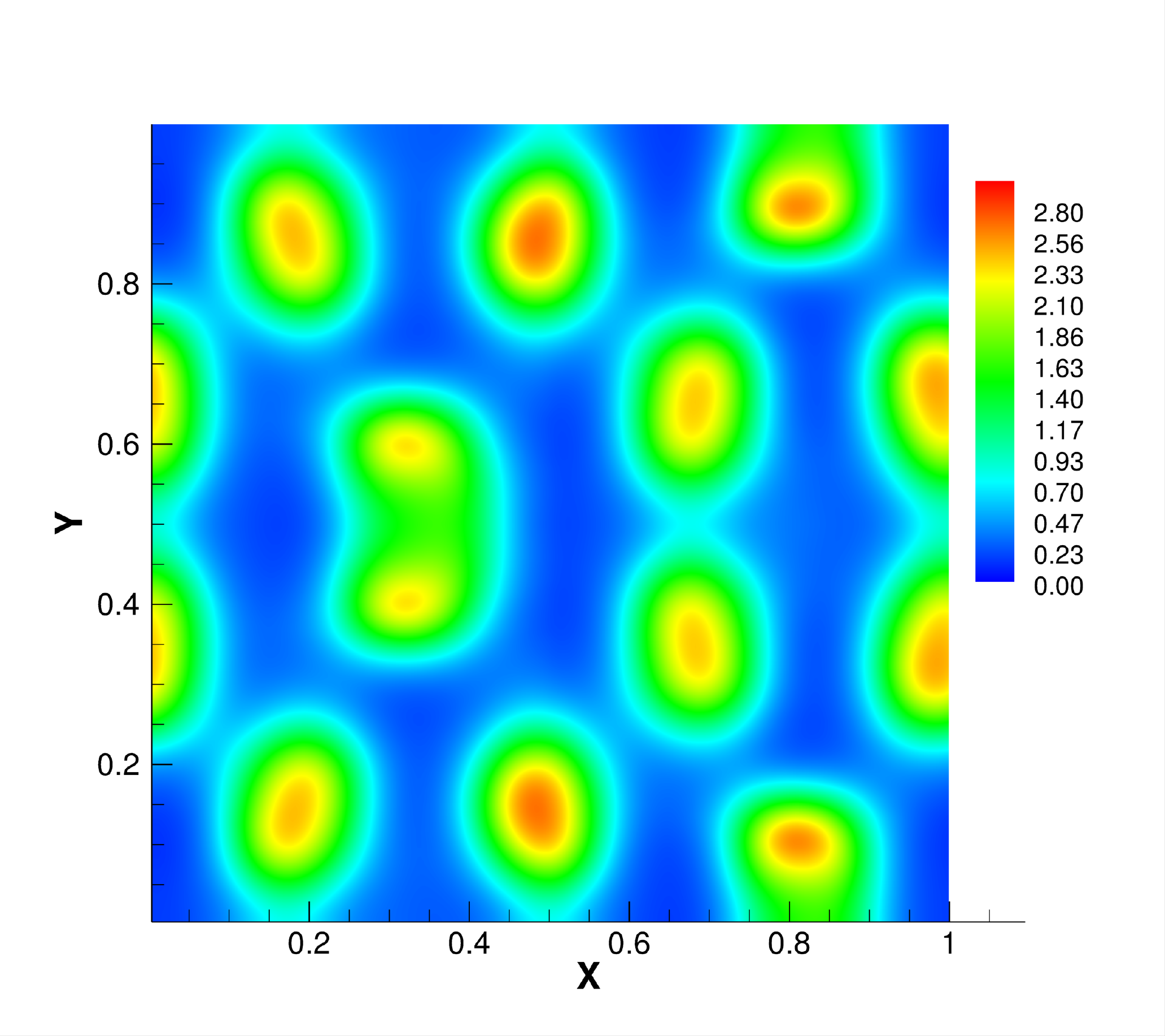}}
  %  \caption{Caption 1}
   \qquad \qquad t=0.7
  \medskip
  \end{minipage}
  %\\[0.5pt]%
  \hspace{0.1\textwidth}%
  \begin{minipage}[b]{0.3\textwidth}
    \centerline{
    \includegraphics[width=2.0in,angle=0,scale=1.0]{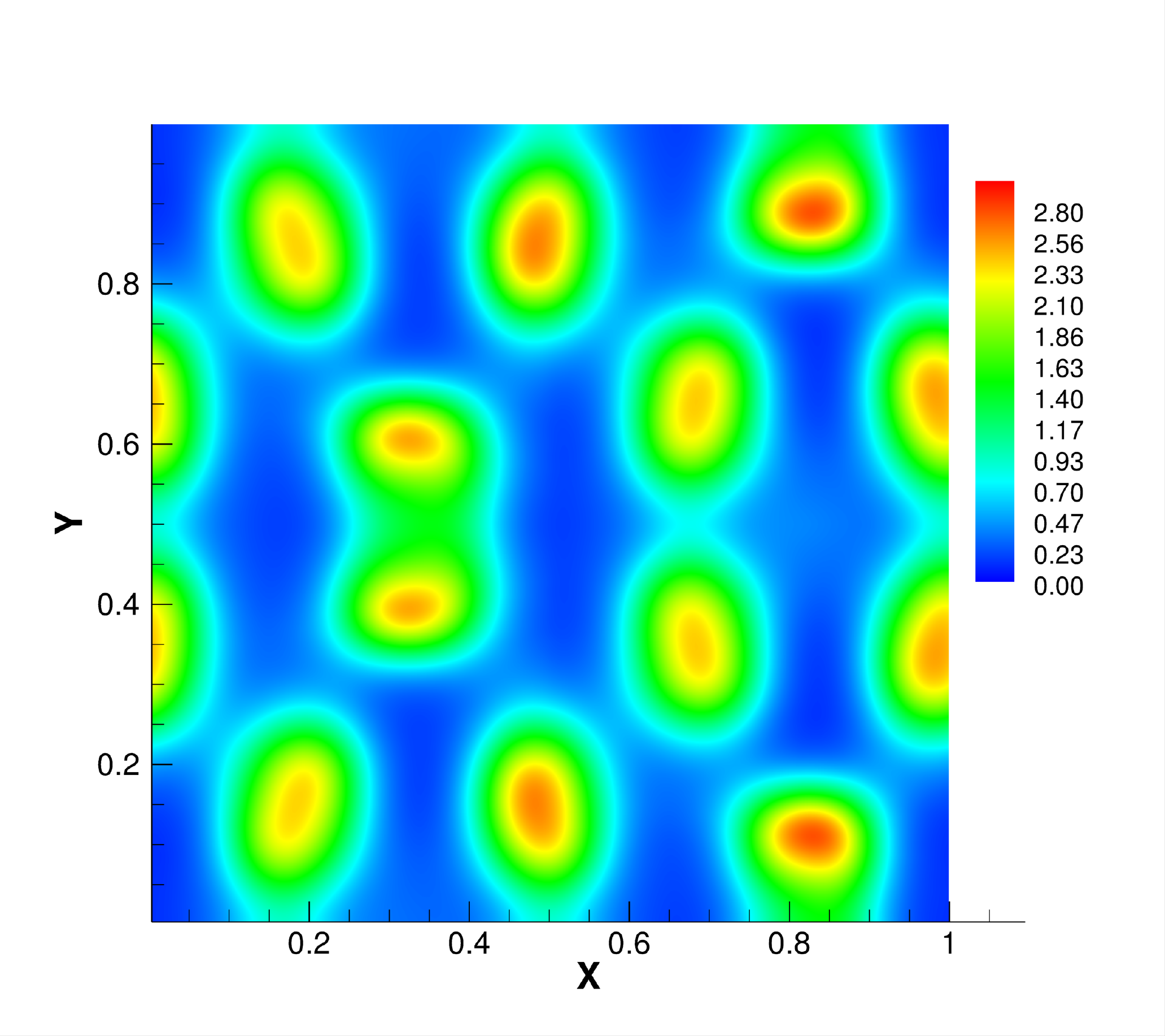}}
   % \caption{Caption 2}
   \qquad \qquad t=0.8
   \medskip
  \end{minipage}
  \hspace{0.02\textwidth}
  \begin{minipage}[b]{0.3\textwidth}
    \centerline{
    \includegraphics[width=2.0in,angle=0,scale=1.0]{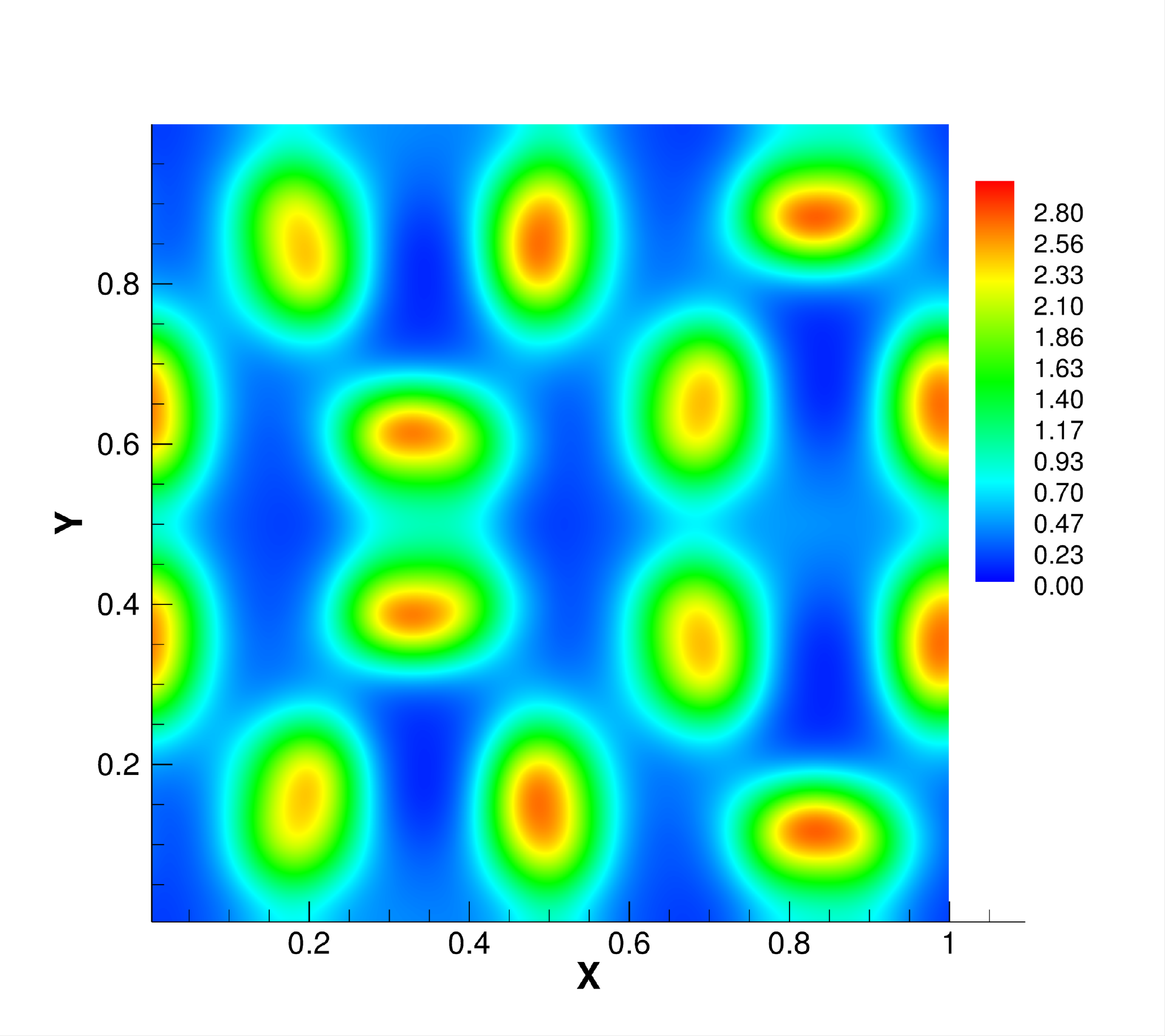}}
    %\caption{Caption 3}
    \qquad \qquad t=1
    \medskip
  \end{minipage}
   \hspace{0.02\textwidth}
  \begin{minipage}[b]{0.3\textwidth}
    \centerline{
    \includegraphics[width=2.0in,angle=0,scale=1.0]{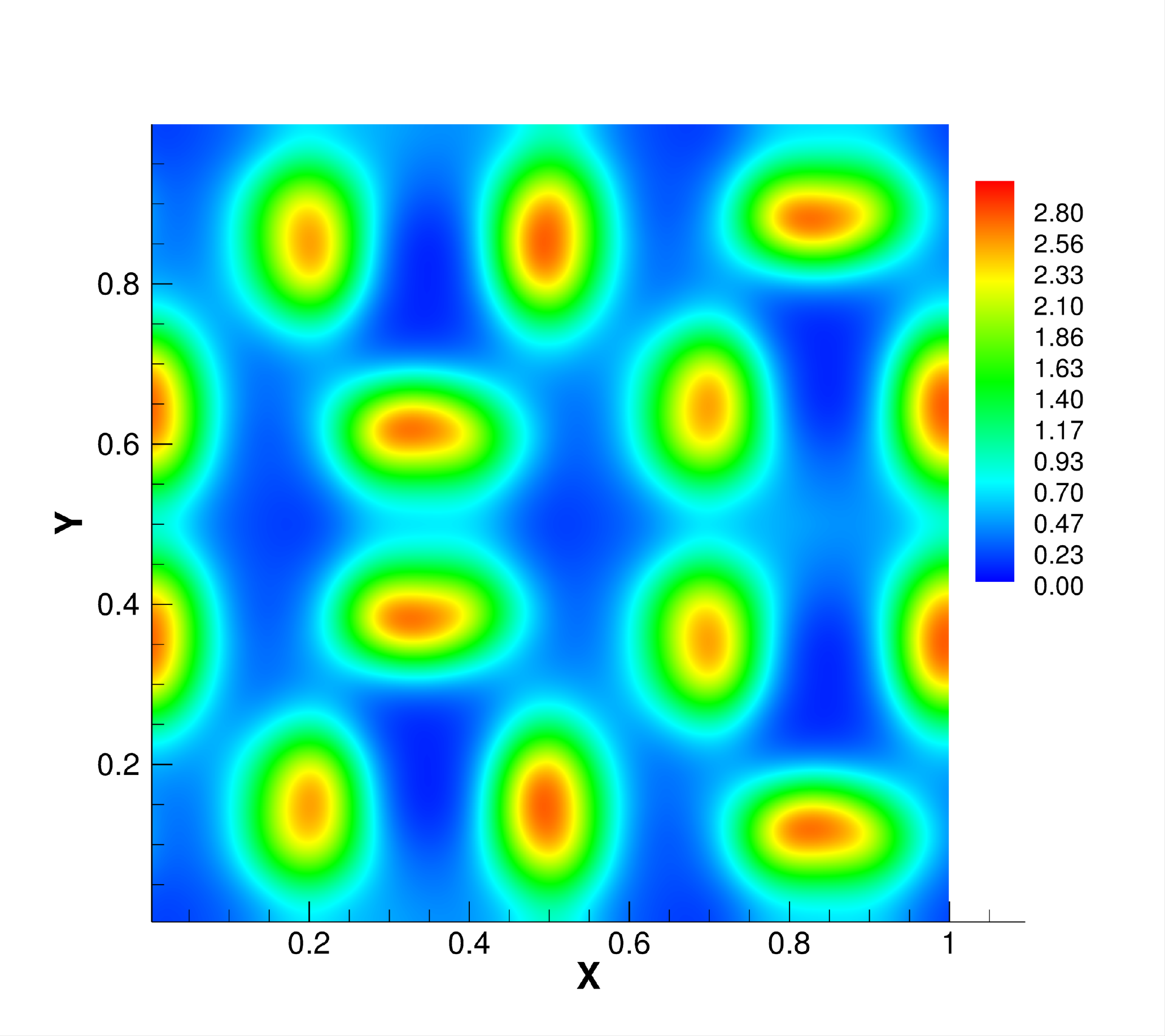}}
    %\caption{Caption 3}
     \qquad \qquad t=1.5
    \medskip
  \end{minipage}
    \caption{Numerical simulations of the Schnakenberg reaction-diffusion model. Concentrations of the activator $C_a$ at different times. Third order Krylov IIF method with $P^2$ sparse grid IPDG. N=8, $\Delta t = h_N$, $M=100$. } \label{fig_pig}
\end{figure}

%%%%%%%%% section 5
 \section{Discussions and conclusions}

Numerically solving multidimensional reaction-diffusion equations is computationally challenging due to
(1) stiffness of the system resulting from high order spatial differential operators and multiscale reaction
rates of different species; (2) large number of spatial grid points associated with high spatial dimensions.
In this paper, we design the Krylov IIF sparse grid DG methods to efficiently solve reaction-diffusion equations
defined on high spatial dimensions. The methods resolve the stiffness of DG spatial discretizations of reaction-diffusion equations via implicit exponential integration in the time direction. Expensive computational costs due to the high spatial dimensionality are taken care of by sparse grid techniques. Numerical experiments of both linear and nonlinear problems show that the methods can achieve both good accuracy and stable computations by using large time step size proportional to the spatial grid size. The stiffness of reaction-diffusion equations is resolved well, and significant
computational cost is saved by using sparse grid techniques.

There are two aspects which can be greatly improved and that are our future research directions. One is how to evaluate of the nonlinear integral $\int_{\Omega} f(u)v d\mathbf{x}$ efficiently, by taking into account the special property of basis function. In the current method, we directly reconstruct numerical approximation $u$ at the Gaussian points of finest level mesh and then calculate the integral by quadrature rules, which does not take advantage of sparsity and special property of the basis functions.  For polynomial type of nonlinearity as in the numerical examples of this work, the computation can be accelerated by computing the tensor products of the multiwavelet bases, but still there is room for improvement for the computation of general nonlinear term $f(u)$.
The other future work is how to preserve the ``local implicit property'' of the Krylov IIF schemes.
One of the nice properties of the IIF and Krylov IIF schemes is that the implicit terms do not involve the matrix exponential operations. Hence when the IIF or Krylov IIF schemes are applied to reaction-diffusion systems of PDEs with finite difference spatial
discretizations as that in \cite{NZZ,LZ1}, or DG spatial discretizations on regular grids \cite{chen2011krylov},
the size of the nonlinear system arising from the implicit treatment is independent of the number of spatial grid points; it only depends on the number of the original PDEs. Solving large size coupled nonlinear algebraic systems
at every time step is avoided and computations are very efficient. This ``local implicit property'' distinguishes IIF and Krylov IIF schemes from many other implicit schemes for stiff PDEs. In this paper, the ``local implicit property'' is not preserved  due to the special choice of DG basis functions on sparse grids.
Because our sparse grid DG approximation is a global representation, the size of the nonlinear algebraic system is the number of degrees of freedom of DG approximation. Although matrix exponentials are not involved in the implicit terms of the schemes, a large nonlinear algebraic system still needs to be solved by Newton iterations at every time step for problems with nonlinear reaction terms. It will be very interesting to figure out an approach to decouple the large nonlinear algebraic system into small size ones to achieve more efficient computations for the proposed sparse grid Krylov IIF DG schemes.

%\section*{Acknowledgments}

%%%%%%%%%%%%%     section reference

\end{document}